\numberwithin{equation}{section}
\theoremstyle{definition}
\newtheorem{exmp}{Example}
\newtheorem{theorem}{Theorem}[section]
\newtheorem{lemma}[theorem]{Lemma}
\newtheorem{proposition}[theorem]{Proposition}
\newtheorem{definition}[theorem]{Definition}
\theoremstyle{remark}
\pgfplotsset{compat=1.3}
\newlength\figurewidth 
\newlength\figureheight
\tikzset{external/force remake=true}
\date{}
\begin{document}



\title{Non-polynomial divided difference and blossoming}

\author[a]{Fatma Z\"{u}rnac{\i}-Yeti\c{s}  \footnote{\textbf{Email addresses:} $^a$fzurnaci@itu.edu.tr }}

\affil[a]{Department of Mathematics Engineering, Istanbul Technical University,  Maslak, Istanbul, 34469, Turkiye}

\setcounter{Maxaffil}{0}
\renewcommand\Affilfont{\small}

\maketitle

\begin{abstract}
	Two notable examples of dual functionals in approximation theory and computer-aided geometric design are the blossom and the divided difference operator. Both of these dual functionals satisfy a similar set of formulas and identities. Moreover, the divided differences of polynomials can be expressed in terms of the blossom. In this paper, an extended non-polynomial homogeneous blossom for a wide collection of spline spaces, including trigonometric splines, hyperbolic splines, and special M\"{u}ntz spaces of splines, is defined. It is shown that there is a relation between the non-polynomial divided difference and the blossom, which is analogous to the polynomial case.
\end{abstract}
{\bf Keywords}  	Non-polynomial divided difference, Homogeneous blossom, Dual functionals
 \\
 {\bf MSC2020 Classification:} 65D17, 41A10

\section{Introduction}
Dual functionals are linear operators that vanish on all basis functions except one, thereby extracting the corresponding coefficient in the expansion of the function with respect to the chosen basis. The divided difference operator and the blossom are two of the most significant examples of dual functionals in approximation theory and computer-aided geometric design. The divided difference provides the dual functionals for the Newton basis, and the blossom represents the dual functionals for the B-splines.

The divided differences are commonly used in numerical analysis and approximation
theory, and are a basic tool in interpolation and approximation by polynomials and in spline theory. Divided differences are directly involved in the definition of B-splines and  also provide the Newton coefficients of the polynomial interpolant. Blossoming is a powerful technique for developing the theory of B\'{e}zier and B-spline curves and surfaces.  The coefficient of B-spline is formed by evaluating the polar form of the polynomial at the knots of the B-spline, and the B\'{e}zier coefficients of a polynomial curve are determined by its blossom evaluated at zeros and ones.  Although the blossom and divided difference seem like two different operators, Goldman shows that  there exists an important relationship between the blossom and the divided difference  in \cite{ron2}. These results demonstrate that the divided difference is in fact  a special case of an extended version of the blossom, and this extended blossom can be constructed directly from divided differences. 

In this work, the relationship between divided difference and blossoming is  generalized for the space	\begin{equation}
	\pi_{n}(\gamma_{1},\gamma_{2})=\text{span}\big\{\gamma_{1}^{n-k}\gamma_{2}^{k}\big\}_{k=0}^{n} ,
\end{equation}
where the functions $ \gamma_{1} $ and $ \gamma_{2} $ are linearly independent.  The space $\pi_{n}(\gamma_{1},\gamma_{2})$ includes as special cases
standard polynomials, trigonometric polynomials, hyperbolic polynomials and special
M\"{u}ntz spaces, as well as many other spaces. Our main goal is to define an  extended non-polynomial homogeneous blossom for the space   $	\pi_{n}(\gamma_{1},\gamma_{2})$  and  establish the relationship between the non-polynomial divided difference and this blossom.

This paper is organized in the following fashion. In Section \ref{section2}, we introduce the basic definitions, fundamental formulas,
and explicit notation for non-polynomial divided differences. Some basic information for blossoming is reviewed in Section
\ref{section3}. In Section \ref{section4}, we invoke the definition of the extended non-polynomial homogeneous blossom for the space $\pi_{n}$, and then the relation between the non-polynomial divided difference and blossoming is given. In Section \ref{conc}, we conclude with a short summary of our results along with a brief discussion of an open problem for future research.

\section{Non-polynomial divided differences}\label{section2}

Classical polynomial divided differences can be defined by a recurrence relation, by a ratio of Vandermonde determinants or using some integral representations. In addition to the classical divided differences, divided differences for non-polynomial spaces, such as trigonometric, hyperbolic or Chebyshevian spaces are studied \cite{schumaker1, Schumaker3, Lyche1, Lyche2}. These various types of divided differences are defined  as  a ratio of Vandermonde determinants. 
Recently, in \cite{fatma}, using the barycentric coordinates of a point on the planar parametric curve $P(t)=(\gamma_{1}(t),\gamma_{2}(t))$, $a \leq t\leq b$ satisfying
\begin{align}\label{dfunc}
	d(x_{1},x_{2})=\gamma_{1}(x_{1})\gamma_{2}(x_{2})-\gamma_{1}(x_{2})\gamma_{2}(x_{1})
\end{align}
that never vanishes for distinct $x_{1},x_{2} \in [a,b]$,  Z\"{u}rnac{\i} and Di\c{s}ib\"{u}y\"{u}k   give an explicit representation of non-polynomial B-spline functions  for the spaces $\pi_{n}(\gamma_{1},\gamma_{2})$ which are a wide collection of spline
spaces. These non-polynomial B-splines are constructed by using non-polynomial divided differences applied to a proper generalization of the truncated-power function. In \cite{fatma}, starting with the interpolation problem, non-polynomial divided differences are defined recursively as a generalization of classical divided differences by the following definition.
\begin{definition}[cf. \cite{fatma}]
	Given $n+1$ distinct abscissas $x_{j},x_{j+1},\ldots,x_{j+n}$, the non-polynomial divided difference of order $ n $ is defined recursively as
	\begin{align}\label{divdif}
		f_{\gamma_{1}, \gamma_{2}}[x_{j},\ldots,x_{j+n}]=\frac{f_{\gamma_{1}, \gamma_{2}}[x_{j},\ldots,x_{j+n-2},x_{j+n}] - f_{\gamma_{1}, \gamma_{2}}[x_{j},\ldots,x_{j+n-1}]}{d(x_{j+n-1},x_{j+n})},	
	\end{align}
	where $ f_{\gamma_{1}, \gamma_{2}}[x_{k}]=f(x_{k}) $ for $ k=j,j+1,\ldots, j+n. $
\end{definition}
To generalize the classical polynomial Hermite interpolation to the non-polynomial case, Z\"{u}rnac{\i} and Di\c{s}ib\"{u}y\"{u}k also define a new derivative operator in \cite{fatma}.
\begin{definition}[cf. \cite{fatma}]\label{derdef}
	The derivative of $ f $ at a point $ x_{0} $ is	
	\begin{equation}\label{derdef1}
		D_{\gamma_{1},\gamma_{2}}\big(f(x_{0})\big):=\lim\limits_{x\to x_{0}}\frac{f(x)-f(x_{0})}{d(x_{0},x)}
	\end{equation}
	provided that the limit exists.
\end{definition}
The notation  $D^{n}_{\gamma_{1},\gamma_{2}} f(x) $ is used when the operator $ D_{\gamma_{1},\gamma_{2}} $ is applied $ n$ times to the function $ f(x)$. Note that for $ \gamma_{1}(x) = 1$ and $\gamma_{2}(x) = x$,\, $D_{\gamma_{1},\gamma_{2}}\big(f(x_0)\big)$ reduces to the classical derivative. Hence, we may consider the operator $D_{\gamma_{1},\gamma_{2}}$ as a generalization of the classical derivative operator. Like the classical derivative, the generalized derivative satisfies  sum, difference, product and quotient rules as well as Leibniz formula expressing the derivative of nth order of the product of two functions.

Numerous identities and properties for non-polynomial divided differences are investigated in \cite{fatma3, fatma2}. The non-polynomial divided difference $f_{\gamma_{1}, \gamma_{2}}[x_{0},\ldots,x_{n}]$ of a differentiable function $f(x)$ can be characterized by a simple collection of axioms. Let $1\in  \text{span}\big\{ \gamma_{1}, \gamma_{2}\big\}$, \\
\textbf{\textit{Symmetry}}
\begin{equation*}
	f_{\gamma_{1}, \gamma_{2}}[x_{0},\ldots,x_{n}]= f_{\gamma_{1}, \gamma_{2}}[x_{\sigma(0)},\ldots,x_{\sigma(n)}]
\end{equation*}
\textbf{\textit{Cancellation}}
\begin{equation*}
	f(x)_{\gamma_{1}, \gamma_{2}}[x_{0},x_{1},\ldots,x_{n}]=\{d(x_{n+1},x)f(x)\}_{\gamma_{1}, \gamma_{2}}[x_{0},x_{1},\ldots,x_{n+1}].
\end{equation*}
\textbf{\textit{Differentiation}}
\begin{equation*}
	f_{\gamma_{1}, \gamma_{2}}[\underbrace{x_{0},x_{0},\ldots,x_{0}}_{n+1\text{ terms}}]=\frac{D^{n}_{\gamma_{1},\gamma_{2}}f(x_{0})}{n!}.
\end{equation*}
Also, the classical Taylor theorem is generalized in \cite{fatma3, fatma2}.

\begin{theorem}[Generalized Taylor Theorem]\label{generalizetaylor}
	Let $x, x_{0} \in [a,b]$ and  $f, \gamma_{1}, \gamma_{2} \in C^{n}[a,b]$  and let $f^{n+1}$, $\gamma_{1}^{n+1}$ and $\gamma_{2}^{n+1}$ exist in the open interval $(a,b)$ .  If $1\in \text{span}\big\{\gamma_{1}, \gamma_{2}\big\}$, then 
	\begin{align*}
		f(x)=  \sum_{k=0}^{n}\frac{D_{\gamma_{1}, \gamma_{2}}^{k}f(x_{0})}{k!}d(x_{0},x)^{k} + R_n (x),
	\end{align*}
	where 
	\begin{align*}
		R_n (x)=D_{\gamma_{1}, \gamma_{2}}^{n+1}f(\xi_{x})\frac{(d(x_{0},x))^{n+1}}{(n+1)!},\,\,\, \xi_{x}\in (a,b).
	\end{align*}	
\end{theorem}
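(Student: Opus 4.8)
The plan is to transcribe the classical proof of Taylor's theorem with Lagrange remainder into the $D_{\gamma_{1},\gamma_{2}}$-calculus: the ordinary derivative is replaced by $D_{\gamma_{1},\gamma_{2}}$, the power $(x-x_{0})^{k}$ by $d(x_{0},x)^{k}$, and Rolle's theorem by a Rolle theorem for $D_{\gamma_{1},\gamma_{2}}$. Two elementary facts drive the argument. First, differentiating $d(y,x)=\gamma_{1}(y)\gamma_{2}(x)-\gamma_{1}(x)\gamma_{2}(y)$ in $x$ at $x=y$ gives $\partial_{x}d(y,x)\big|_{x=y}=W(y)$, where $W:=\gamma_{1}\gamma_{2}'-\gamma_{1}'\gamma_{2}$ is the Wronskian (non-vanishing on $[a,b]$ in this setting, which is what makes $D_{\gamma_{1},\gamma_{2}}$ a genuine first-order operator); hence $D_{\gamma_{1},\gamma_{2}}g(y)=g'(y)/W(y)$ for any $g\in C^{1}$, and since $W$ never vanishes, $D_{\gamma_{1},\gamma_{2}}g$ vanishes wherever $g'$ does. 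This upgrades classical Rolle to: if $g(p)=g(q)=0$, then $D_{\gamma_{1},\gamma_{2}}g(\xi)=0$ for some $\xi$ strictly between $p$ and $q$. Second --- and this is where the hypothesis $1\in\text{span}\{\gamma_{1},\gamma_{2}\}$ enters --- writing $1=c_{1}\gamma_{1}+c_{2}\gamma_{2}$ and differentiating shows that $\partial_{t}d(x_{0},t)=W(t)$ holds \emph{identically} in $t$, not merely at $t=x_{0}$. Hence $D_{\gamma_{1},\gamma_{2}}\big(d(x_{0},\cdot)\big)\equiv 1$, and the product rule for $D_{\gamma_{1},\gamma_{2}}$ gives the power rule $D_{\gamma_{1},\gamma_{2}}\big(d(x_{0},\cdot)^{k}\big)=k\,d(x_{0},\cdot)^{k-1}$, so that $D^{j}_{\gamma_{1},\gamma_{2}}\big(d(x_{0},\cdot)^{k}\big)=\tfrac{k!}{(k-j)!}\,d(x_{0},\cdot)^{k-j}$ for $0\le j\le k$ and $D^{j}_{\gamma_{1},\gamma_{2}}\big(d(x_{0},\cdot)^{k}\big)\equiv 0$ for $j>k$; in particular $D^{j}_{\gamma_{1},\gamma_{2}}\big(d(x_{0},\cdot)^{k}\big)(x_{0})=k!\,\delta_{jk}$ when $0\le j\le k$.

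Fix $x\in[a,b]$. If $x=x_{0}$ the identity is trivial since $d(x_{0},x_{0})=0$; so assume $x\neq x_{0}$ and let $I$ be the open interval with endpoints $x_{0}$ and $x$, noting $I\subset(a,b)$. Put
\begin{equation*}
p(t)=\sum_{k=0}^{n}\frac{D^{k}_{\gamma_{1},\gamma_{2}}f(x_{0})}{k!}\,d(x_{0},t)^{k},
\end{equation*}
choose $M$ so that $f(x)=p(x)+M\,d(x_{0},x)^{n+1}$ (possible because $d(x_{0},x)\neq 0$), and set $\Phi(t)=f(t)-p(t)-M\,d(x_{0},t)^{n+1}$. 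By the formulas above, $D^{j}_{\gamma_{1},\gamma_{2}}p(x_{0})=D^{j}_{\gamma_{1},\gamma_{2}}f(x_{0})$ and $D^{j}_{\gamma_{1},\gamma_{2}}\big(d(x_{0},\cdot)^{n+1}\big)(x_{0})=0$ for $0\le j\le n$, hence
\begin{equation*}
\Phi(x_{0})=D_{\gamma_{1},\gamma_{2}}\Phi(x_{0})=\cdots=D^{n}_{\gamma_{1},\gamma_{2}}\Phi(x_{0})=0,
\end{equation*}
while $\Phi(x)=0$ by the choice of $M$.

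Now iterate the generalized Rolle theorem. From $\Phi(x_{0})=\Phi(x)=0$ there is $\xi_{1}\in I$ with $D_{\gamma_{1},\gamma_{2}}\Phi(\xi_{1})=0$; since also $D_{\gamma_{1},\gamma_{2}}\Phi(x_{0})=0$, applying it to $D_{\gamma_{1},\gamma_{2}}\Phi$ on the interval with endpoints $x_{0},\xi_{1}$ yields $\xi_{2}$ between $x_{0}$ and $\xi_{1}$ with $D^{2}_{\gamma_{1},\gamma_{2}}\Phi(\xi_{2})=0$, and so on; after $n+1$ steps we obtain $\xi_{x}\in I\subset(a,b)$ with $D^{n+1}_{\gamma_{1},\gamma_{2}}\Phi(\xi_{x})=0$. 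But $D^{n+1}_{\gamma_{1},\gamma_{2}}p\equiv 0$ (each term of $p$ is $d(x_{0},\cdot)^{k}$ with $k\le n$) and $D^{n+1}_{\gamma_{1},\gamma_{2}}\big(d(x_{0},\cdot)^{n+1}\big)\equiv (n+1)!$, so $0=D^{n+1}_{\gamma_{1},\gamma_{2}}f(\xi_{x})-(n+1)!\,M$, i.e.\ $M=D^{n+1}_{\gamma_{1},\gamma_{2}}f(\xi_{x})/(n+1)!$. Inserting this into $f(x)=p(x)+M\,d(x_{0},x)^{n+1}$ is exactly the assertion.

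The hard part will be the power rule $D_{\gamma_{1},\gamma_{2}}\big(d(x_{0},\cdot)^{k}\big)=k\,d(x_{0},\cdot)^{k-1}$: it rests on the identity $\partial_{t}d(x_{0},t)=W(t)$, and hence on $1\in\text{span}\{\gamma_{1},\gamma_{2}\}$, and it is what makes $p$ the correct generalized Taylor polynomial and makes $D^{n+1}_{\gamma_{1},\gamma_{2}}$ annihilate it; the statement fails without this hypothesis. The rest is routine bookkeeping: one must check that $D^{j}_{\gamma_{1},\gamma_{2}}\Phi$ is continuous on the relevant closed subintervals and $D_{\gamma_{1},\gamma_{2}}$-differentiable on their interiors for $0\le j\le n$, which through $D_{\gamma_{1},\gamma_{2}}g=g'/W$ reduces, by an easy induction, to the hypotheses $f,\gamma_{1},\gamma_{2}\in C^{n}[a,b]$, the existence of $f^{n+1},\gamma_{1}^{n+1},\gamma_{2}^{n+1}$ on $(a,b)$, and $W\neq 0$. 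An essentially equivalent but shorter route, worth recording, is: use $1\in\text{span}\{\gamma_{1},\gamma_{2}\}$ to write $\text{span}\{\gamma_{1},\gamma_{2}\}=\text{span}\{1,\phi\}$ with $\phi\in C^{n}[a,b]$ strictly monotone; observe that after the change of variable $u=\lambda\phi$, with $\lambda\neq 0$ chosen so that $d(x_{0},x)=u(x)-u(x_{0})$, the operator $D_{\gamma_{1},\gamma_{2}}$ becomes $\tfrac{\dd}{\dd u}$; and then quote the classical Taylor theorem with Lagrange remainder in the variable $u$. Only the change of variable needs justification, and it again relies on $W\neq 0$.
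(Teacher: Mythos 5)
Your argument is correct, but note that this paper does not actually prove Theorem~\ref{generalizetaylor}: it is quoted from the references \cite{fatma3,fatma2}, so there is no in-paper proof to compare against. On its own merits your proposal is sound. The two driving facts are verified correctly: $D_{\gamma_1,\gamma_2}g=g'/W$ with $W=\gamma_1\gamma_2'-\gamma_1'\gamma_2$, and the identity $\partial_t d(x_0,t)=W(t)$, which does follow from $1=c_1\gamma_1+c_2\gamma_2$ because both $(\gamma_1(x_0)-\gamma_1(t),\gamma_2(x_0)-\gamma_2(t))$ and $(\gamma_1'(t),\gamma_2'(t))$ are orthogonal to $(c_1,c_2)\neq(0,0)$ and hence parallel; this yields $D_{\gamma_1,\gamma_2}\bigl(d(x_0,\cdot)\bigr)\equiv 1$, the power rule, the generalized Rolle theorem, and then the standard Lagrange-remainder iteration goes through verbatim. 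The one point you should make explicit rather than parenthetical is that $W$ never vanishes on $[a,b]$: this is \emph{not} a consequence of the stated hypothesis that $d(x_1,x_2)\neq 0$ for distinct arguments (take $\gamma_1=1$, $\gamma_2=x^3$ near $0$), but it is forced implicitly because otherwise $D_{\gamma_1,\gamma_2}$ is not defined and the statement is vacuous; flagging it as a standing assumption suffices. Your alternative route --- writing $\mathrm{span}\{\gamma_1,\gamma_2\}=\mathrm{span}\{1,\phi\}$, observing $d(x_0,x)=u(x)-u(x_0)$ with $u=\det(A)\,\phi$ and $D_{\gamma_1,\gamma_2}=\tfrac{\dd}{\dd u}$, and quoting classical Taylor in the variable $u$ --- is also valid and is arguably the cleanest way to see why the hypothesis $1\in\mathrm{span}\{\gamma_1,\gamma_2\}$ is exactly what makes the theorem a change-of-variable shadow of the classical one.
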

Using the properties of the generalized derivative, we can easily derive that the generalized Taylor formula for a function in the space $ \pi_{n}(\gamma_{1},\gamma_{2})$ is equal to the function itself.

\section{Blossoming}\label{section3}

In this section, some basic information concerning blossoming is  briefly reviewed.   The blossom of a degree $n$ polynomial $P(x)$ is the function $p(x_1, \ldots, x_n)$ that satisfies the following three axioms: 
\begin{itemize}
	\item[]\textit{Symmetry}
	\begin{equation*}
		p(x_1, \ldots, x_n)= p(x_{\sigma(1)}, \ldots, x_{\sigma(n)})
	\end{equation*}
	\item[] \textit{Multiaffine}
	\begin{equation*}
		p(x_1,\ldots,(1-\alpha)u+\alpha w,\ldots, x_n)=(1-\alpha)p(x_1,\ldots,u,\ldots, x_n)+\alpha p(x_1,\ldots, w,\ldots, x_n)
	\end{equation*}
	\item[]\textit{Diagonal}
	\begin{equation*}
		p(\underbrace{x, \ldots, x}_n)= P(x)
	\end{equation*}
\end{itemize}

There is also a multilinear version of the blossom for homogeneous polynomials. The blossom   of a degree $n$ homogeneous polynomial $P(x,w)$ is the function $p((x_1,w_1),\ldots, (x_n,w_n))$ that satisfies the following three axioms:
\begin{itemize}
	\item[] \textit{Symmetry}
	\begin{equation*}
		p((x_1,w_1), \ldots, (x_n,w_n))= p((x_{\sigma(1)},w_{\sigma(1)}), \ldots, (x_{\sigma(n)},w_{\sigma(n)}))
	\end{equation*}
	\item[] \textit{Multilinear}
	\begin{align*}
		p((x_1,w_1), \ldots, \alpha (x_{i},w_{i})+\beta( x_{i},v_i), \ldots, (x_n,w_n))=\alpha p((x_1,w_1), \ldots,  (x_{i},w_{i}), \ldots, (x_n,w_n))\\+\beta p((x_1,w_1), \ldots,( x_{i},v_i), \ldots, (x_n,w_n))
	\end{align*}
	\item[] \textit{Diagonal}
	\begin{equation*}
		p(\underbrace{(x,w), \ldots, (x,w)}_n)=P(x,w).
	\end{equation*}
\end{itemize}
We may retrieve the univariate polynomial $P(x, 1)$  from the homogeneous polar form of $P(x,w)$ by setting  $(x_i ,w_i ) = (x, 1)$, $i = 1,\ldots,n $. Thus
$$P(x, 1) = p((x, 1), \ldots , (x, 1), (x, 1)).$$

In \cite{ron1}, Goldman defines the extended blossom such that the standard blossom is extended to incorporate additional parameters.

\begin{definition} The extended blossom of order $k \in \mathbb{Z}$ of a function $F(x)$ is a function 	$f(x_{1},\ldots,x_{n+k}/v_{1},\ldots,v_{k})$ which satisfies the following axioms:
	\begin{itemize}
		\item[] \textit{Bisymmetry}
		\begin{equation*}
			f(x_{1},\ldots,x_{m} / v_{1},\ldots,v_{n})= 	f(x_{\sigma(1)},\ldots,x_{\sigma(m)}/v_{\tau(1)},\ldots,v_{\tau(n)})
		\end{equation*}
		\item[] \textit{Multiaffine in  $x$}
		\begin{align*}
			f(x_1, \ldots, (1-\alpha) u+\alpha v, \ldots, x_{m}/v_{1},\ldots,v_{n})=(1-\alpha)	f(x_1, \ldots,u, \ldots, x_{m}/v_{1},\ldots,v_{n})\\+\alpha	f(x_1, \ldots, v, \ldots, x_{m}/v_{1},\ldots,v_{n})
		\end{align*}
		\item[] \textit{Cancellation 
		}
		\begin{align*}
			f(x_{1},\ldots,x_{m},w/v_{1},\ldots,v_{n},w)=	f(x_{1},\ldots,x_{m}/v_{1},\ldots,v_{n})
		\end{align*}
		\item[] \textit{Diagonal}
		\begin{equation*}
			f(\underbrace{x, \ldots, x}_{m}/\underbrace{x, \ldots,x}_{n})=F(x).
		\end{equation*}
	\end{itemize}
\end{definition}
When $k=m-n\geq 0$, the function $F(x)$ must be a polynomial in $x$ of degree less than or equal to $k$. In \cite{ron1}, Goldman establishes the existence and uniqueness of the extended blossom of $P(x)$. When $k=m-n < 0$, the function $F(x)$ need no longer be a polynomial in $x$,  $F(x)$ may be either differentiable or analytic functions. In \cite{ron1}, Goldman establishes the existence for the negative order blossom, and the uniqueness  is demonstrated in \cite{tuncer}.

In addition to polynomial spaces, the blossom of non-polynomial functions is also studied. 
Gonsor and Neamtu develop blossoms for trigonometric polynomials \cite{gonsor1}, and demonstrate that the dual functionals for trigonometric Bernstein basis functions are given by the blossom for trigonometric polynomials. In \cite{cetin2}, Di\c{s}ib\"{u}y\"{u}k and Goldman generalize these results to more general spaces $	\pi_{n}(\gamma_{1},\gamma_{2})$   introducing a more general notion of blossoms, and the homogeneous blossoming theory is generalized for the space $	\pi_{n}(\gamma_{1},\gamma_{2})$. To\, construct blossoms for non-polynomial spaces, one needs to change at least one of the three polynomial blossoming axioms. Two axioms are flexible, but one usually retains the symmetry property. Gonsor and Neamtu \cite{gonsor1} have examined polar forms for trigonometric polynomials. They begin with the multiaffine polar form, maintain its symmetry and diagonal properties, and substitute a multi-barycentric property for the multiaffine one. In  \cite{cetin2},  Di\c{s}ib\"{u}y\"{u}k and Goldman start with the multilinear variant of the polar form for homogeneous polynomials, maintain the multilinear and symmetry properties, and modify the diagonal property by substituting the values  $ \gamma_{1}(x) $ and $ \gamma_{2}(x) $ for the parameter pairs $ (x,w)$. They give the following definition for the non-polynomial homogeneous blossom:

\begin{definition}
	Let	$G\in \pi_{n}(\gamma_{1},\gamma_{2}) $. Then $g$ is called the blossom of $G$ if it satisfies the following three axioms:
	\begin{itemize}
		\item[]\textit{Symmetry}
		\begin{equation*}
			g((x_1,w_1), \ldots, (x_n,w_n))= g((x_{\sigma(1)},w_{\sigma(1)}), \ldots, (x_{\sigma(n)},w_{\sigma(n)}))
		\end{equation*}
		\item[]\textit{Multilinear}
		\begin{align*}
			g((x_1,w_1), \ldots, \alpha (x_{i},w_{i})+\beta( x_{i},v_i), \ldots, (x_n,w_n))=\alpha g((x_1,w_1), \ldots,  (x_{i},w_{i}), \ldots, (x_n,w_n))\\+\beta g((x_1,w_1), \ldots,( x_{i},v_i), \ldots, (x_n,w_n))
		\end{align*}
		\item[]\textit{Diagonal}
		\begin{equation*}
			g((\gamma_{1}(x),\gamma_{2}(x)), \ldots, (\gamma_{1}(x),\gamma_{2}(x)))=G(x)
		\end{equation*}
	\end{itemize}
\end{definition}

\section{Non-polynomial divided difference and blossoming}\label{section4}
The non-polynomial homogeneous blossom given in \cite{cetin2} exists. Now we can give a constructive proof for existence.
\begin{theorem}\label{standart_b}
	Let $G\in \pi_{k}(\gamma_{1},\gamma_{2}) $ and $k\leq m$ . If $1\in \text{span}\big\{\gamma_{1}, \gamma_{2}\big\}$, then for all $\tau$	
	\begin{align}
		g((x_{1},w_{1}),\ldots,(x_{m},w_{m}))=\sum_{j}\frac{(-1)^{m-j}}{m!}D_{\gamma_{1}, \gamma_{2}}^{j}\Psi(\tau) D_{\gamma_{1}, \gamma_{2}}^{m-j}G(\tau),
	\end{align}
	where
	\begin{align} 
		\Psi(x)=\prod_{i=1}^{m}\left(x_i \gamma_{2}(x)-w_i \gamma_{1}(x)\right).
	\end{align}
\end{theorem}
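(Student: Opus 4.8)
The plan is to show that the function of $(x_{1},w_{1}),\dots,(x_{m},w_{m})$ defined by the right-hand side, with $\tau$ kept fixed and arbitrary, satisfies the three defining axioms of the non-polynomial homogeneous blossom of \cite{cetin2}; this proves existence and, together with the uniqueness of that blossom, forces the expression to be independent of $\tau$, which is the ``for all $\tau$'' clause. Two of the three axioms are immediate. Each factor $x_{i}\gamma_{2}(x)-w_{i}\gamma_{1}(x)$ of $\Psi$ lies in $\text{span}\{\gamma_{1},\gamma_{2}\}$, so $\Psi(\cdot)\in\pi_{m}(\gamma_{1},\gamma_{2})$ and the sum is finite (one reads $\sum_{j}$ as $\sum_{j=0}^{m}$; terms with $m-j>k$ vanish since $G\in\pi_{k}$). \emph{Symmetry} in the pairs holds because $\Psi$ is a product over $i$ while $D_{\gamma_{1},\gamma_{2}}^{j}$ acts only on $x$ and $G$ does not involve the pairs; \emph{multilinearity} holds because each factor of $\Psi$ is linear in its pair, so $\Psi$ is multilinear in the pairs, and $D_{\gamma_{1},\gamma_{2}}^{j}$ is linear.

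The content is the \emph{diagonal} axiom. Putting $(x_{i},w_{i})=(\gamma_{1}(s),\gamma_{2}(s))$ for all $i$ turns each factor of $\Psi$ into $\gamma_{1}(s)\gamma_{2}(x)-\gamma_{2}(s)\gamma_{1}(x)=d(s,x)$, so $\Psi(x)=d(s,x)^{m}$, and the axiom to verify reads
\begin{equation*}
\sum_{j=0}^{m}\frac{(-1)^{m-j}}{m!}\,D_{\gamma_{1},\gamma_{2}}^{j}\big[d(s,\cdot)^{m}\big](\tau)\;D_{\gamma_{1},\gamma_{2}}^{m-j}G(\tau)=G(s).
\end{equation*}
The key is the ``power rule'' $D_{\gamma_{1},\gamma_{2}}^{j}\big[d(s,\cdot)^{m}\big](\tau)=\frac{m!}{(m-j)!}\,d(s,\tau)^{m-j}$. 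I would obtain it from the cocycle identity $d(s,x)=d(\tau,x)+d(s,\tau)$, which holds because both sides lie in $\pi_{1}(\gamma_{1},\gamma_{2})=\text{span}\{\gamma_{1},\gamma_{2}\}$ (the constant $d(s,\tau)$ belongs there since $1\in\text{span}\{\gamma_{1},\gamma_{2}\}$), they agree at the two distinct points $x=s$ and $x=\tau$, and any nonzero element of that two-dimensional space vanishes at most once because $d$ never vanishes for distinct arguments. Hence $d(s,x)^{m}=(d(\tau,x)+d(s,\tau))^{m}=\sum_{\ell=0}^{m}\binom{m}{\ell}d(s,\tau)^{m-\ell}\,d(\tau,x)^{\ell}$, which is the expansion of $d(s,\cdot)^{m}\in\pi_{m}(\gamma_{1},\gamma_{2})$ in the Newton-type basis $\{d(\tau,\cdot)^{\ell}\}_{\ell=0}^{m}$; by the Generalized Taylor Theorem (Theorem~\ref{generalizetaylor}) this same expansion equals $\sum_{\ell}\frac{1}{\ell!}D_{\gamma_{1},\gamma_{2}}^{\ell}\big[d(s,\cdot)^{m}\big](\tau)\,d(\tau,\cdot)^{\ell}$, and since those powers of the non-constant function $d(\tau,\cdot)$ are linearly independent, matching coefficients yields the power rule.

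Substituting it into the display, reindexing by $\ell=m-j$ and using the antisymmetry $d(s,\tau)=-d(\tau,s)$, the left side collapses to $\sum_{\ell=0}^{m}\frac{1}{\ell!}\,d(\tau,s)^{\ell}\,D_{\gamma_{1},\gamma_{2}}^{\ell}G(\tau)$, which is exactly the Generalized Taylor expansion of $G$ about $\tau$ evaluated at $s$; since $G\in\pi_{k}(\gamma_{1},\gamma_{2})$ with $k\le m$, the order-$m$ remainder vanishes, so the sum is $G(s)$. This verifies the diagonal axiom, so the right-hand side is a blossom of $G$, and uniqueness of the non-polynomial homogeneous blossom gives the identity for every $\tau$. (The $\tau$-independence can also be seen directly: applying $D_{\gamma_{1},\gamma_{2}}$ in $\tau$ and using the product rule, the two sums produced cancel after an index shift, the boundary terms dropping out because $D_{\gamma_{1},\gamma_{2}}^{m+1}\Psi=0$ and $D_{\gamma_{1},\gamma_{2}}^{m+1}G=0$.)

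I expect the main obstacle to be the power rule, and within it the careful justification of the cocycle identity $d(s,x)=d(\tau,x)+d(s,\tau)$ and of the linear independence of $\{d(\tau,\cdot)^{\ell}\}_{\ell=0}^{m}$ — both resting on $1\in\text{span}\{\gamma_{1},\gamma_{2}\}$ and on $d$ being non-vanishing off the diagonal. After that the argument is bookkeeping, the one real trap being sign errors in the reindexing and in the use of $d(s,\tau)=-d(\tau,s)$.
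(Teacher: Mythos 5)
Your proposal is correct and takes essentially the same route as the paper: verify the three blossom axioms, with symmetry and multilinearity read off from $\Psi$, and the diagonal property obtained by specializing $\Psi$ to $d(s,\cdot)^m$ and recognizing the sum as the generalized Taylor expansion of $G$ at $\tau$. You simply supply the details (the cocycle identity, the power rule $D_{\gamma_{1},\gamma_{2}}^{j}d(s,\cdot)^{m}=\tfrac{m!}{(m-j)!}d(s,\cdot)^{m-j}$, and the $\tau$-independence) that the paper's three-sentence proof leaves implicit.
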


\begin{proof}
	We shall show that the axioms of the non-polynomial homogeneous blossom are satisfied. It is clear from the function $\Psi(x)$ that $g((x_{1},w_{1}),\ldots,(x_{m},w_{m}))$ is symmetric and multilinear. It is possible to deduce the diagonal property by noting that the right-hand side becomes the generalized Taylor expansion of $G(t)$ at $\tau$ when $(x_{1},w_{1})=\ldots=(x_{m},w_{m})= (\gamma_{1}(t),\gamma_{2}(t))$.
\end{proof}

The standard extended blossom is defined by Goldman \cite{ron1,ron2}. Here the extended non-polynomial homogeneous blossom is defined.
\begin{definition} The extended  non-polynomial  homogeneous blossom of order $k \in \mathbb{Z}$ of a function $H(x)$ is a function 	$h((x_{1},w_{1}),\ldots,(x_{m},w_{m})/(u_{1},v_{1}),\ldots,(u_{n},v_{n}))$ with $k=m-n$ which satisfies the following axioms:
	\begin{itemize}
		\item[] \textit{Bisymmetry in the $(x,w)$ and $(u,v)$ parameters:}
		\begin{align*}
			h((x_{1},w_{1}),\ldots,(x_{m},w_{m})/(u_{1},v_{1})&,\ldots,(u_{n},v_{n}))\\&= h((x_{\sigma(1)},w_{\sigma(1)}),\ldots,(x_{\sigma(m)},w_{\sigma(m)})/(u_{\tau(1)},v_{\tau(1)}),\ldots,(u_{\tau(n)},v_{\tau(n)}))	
		\end{align*}
		\item[] \textit{ Multilinearity in the $(x,w)$ parameters:}
		\begin{align*}
			h((x_{1},w_{1}),\ldots, a(x_{i},w_{i})+ b(y_{i},z_{i}) ,\ldots ,&(x_{m},w_{m})/(u_{1},v_{1}),\ldots,(u_{n},v_{n}))\\=&a h((x_{1},w_{1}),\ldots,(x_{i},w_{i}),\ldots ,(x_{m},w_{m})/(u_{1},v_{1}),\ldots,(u_{n},v_{n}))\\+ & b h((x_{1},w_{1}),\ldots,(y_{i},z_{i}) ,\ldots ,(x_{m},w_{m})/(u_{1},v_{1}),\ldots,(u_{n},v_{n}))	\end{align*}
		\item[]  \textit{Cancellation}
		\begin{align*}
			h((x_{1},w_{1}),\ldots,(x_{m},w_{m}),(y,z)/(u_{1},v_{1}),\ldots,(u_{n},v_{n}),(y,z))=h((x_{1},w_{1}),\ldots,(x_{m},w_{m})/(u_{1},v_{1}),\ldots,(u_{n},v_{n}))
		\end{align*}
		\item[]  \textit{Diagonal}
		\begin{equation*}
			h(\underbrace{(\gamma_{1}(x),\gamma_{2}(x)), \ldots, (\gamma_{1}(x),\gamma_{2}(x))}_{m}/\underbrace{(\gamma_{1}(x),\gamma_{2}(x)), \ldots, (\gamma_{1}(x),\gamma_{2}(x))}_{n})=H(x)
		\end{equation*}
	\end{itemize} 
\end{definition}

Below we will establish the existence and uniqueness of this extended homogeneous blossom of a function from the space $\pi_{n}$ for $m \geq n$. However, before doing so, we  examine a few simple examples.
\begin{exmp}Let $G\in \pi_{l}(\gamma_{1},\gamma_{2}), \,l\leq k$, and let $g_{e}((x_{1},w_{1}),\ldots,(x_{n+k},w_{n+k})/(u_{1},v_{1}),\ldots,(u_{n},v_{n}))$	denote the extended  non-polynomial  homogeneous blossom of $G(x)$ of order $k$.
	\begin{itemize}
		\item[1.] $G(x)=1$	\\
		$$	g_{e}((x_{1},w_{1}),\ldots,(x_{n+k},w_{n+k})/(u_{1},v_{1}),\ldots,(u_{n},v_{n}))=1$$
		\item[2.]  $G(x)=d(a,x)$
		$$	g_{e}((x_{1},w_{1}),\ldots,(x_{n+k},w_{n+k})/(u_{1},v_{1}),\ldots,(u_{n},v_{n}))=\frac{\sum_{i=1}^{n+k}(x_{i}\gamma_{2}(a)-w_{i}\gamma_{1}(a))-\sum_{i=1}^{k}(u_{i}\gamma_{2}(a)-v_{i}\gamma_{1}(a))}{n}$$\\
		
		\item [3.] $G(x)=(d(a,x))^{2}$
		\begin{align*}
			g_{e}((x_{1},w_{1}),\ldots,(x_{n+k},w_{n+k})&/(u_{1},v_{1}),\ldots,(u_{n},v_{n}))\\&=\frac{1}{\binom{n}{2}}\bigg(\sum_{i<j}\left(x_{i}x_{j}(\gamma_{2}(a))^{2}-(x_{i}w_{j}+x_{j}w_{i})\gamma_{2}(a)\gamma_{1}(a)+w_{i}w_{j}(\gamma_{1}(a))^{2}\right)\\&-\sum_{i,j}\left(u_{i}x_{j}(\gamma_{2}(a))^{2}-(u_{i}w_{j}+x_{j}v_{i})\gamma_{2}(a)\gamma_{1}(a)+v_{i}w_{j}(\gamma_{1}(a))^{2}\right) \\ &+ \sum_{i\leq j}\left(u_{i}u_{j}(\gamma_{2}(a))^{2}-(u_{i}v_{j}+u_{j}v_{i})\gamma_{2}(a)\gamma_{1}(a)+ v_{i}v_{j}(\gamma_{1}(a))^{2}\right)\bigg).
		\end{align*}	
	\end{itemize}
	
\end{exmp}

\begin{theorem}\label{positive_eb}
	Let  $G\in \pi_{l}(\gamma_{1},\gamma_{2}), \,l\leq k$, and let $g((x_{1},w_{1}),\ldots,(x_{m},w_{m}))$	denote the non-polynomial homogeneous blossom of $G(x)$. Then the extended  non-polynomial  homogeneous blossom of $G(x)$ of order $k$ is given by
	\begin{align}
		g_{e}((x_{1},w_{1}),\ldots,(x_{m},w_{m})/(u_{1},v_{1}),\ldots,(u_{n},v_{n}))=\sum (-1)^{\beta} g(x_{i_{1}},w_{i_{1}}),\ldots,(x_{i_{\alpha}},w_{i_{\alpha}}),(u_{j_{1}},v_{j_{1}}),\ldots,(u_{j_{\beta}},v_{j_{\beta}})),
	\end{align}
	where the sum taken over all collections of indices ${i_{1},\ldots,i_{\alpha}}$ and ${j_{1},\ldots,j_{\beta}}$ such that \\
	(i) $i_{1},\ldots,i_{\alpha}$ are distinct\\
	(ii) $ j_{1},\ldots,j_{\beta}$ need not be distinct \\
	(iii) $\alpha+\beta=k=m-n$.
\end{theorem}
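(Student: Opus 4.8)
The plan is to verify that the right-hand side, which I will write $\tilde g_e$, satisfies the four axioms -- bisymmetry, multilinearity in the top parameters, cancellation, and diagonal -- defining the extended non-polynomial homogeneous blossom of order $k$; together with the uniqueness established below, this yields $g_e=\tilde g_e$. Throughout, $g$ denotes the non-polynomial homogeneous blossom of $G$ with exactly $k$ arguments, which exists by Theorem \ref{standart_b} since $l\le k$, and each summand of $\tilde g_e$ is $g$ evaluated at the $\alpha+\beta=k$ selected pairs, carrying the sign $(-1)^{\beta}$.

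Bisymmetry is immediate: a permutation of $1,\dots,m$ induces a bijection of the admissible top-subsets $\{i_1,\dots,i_\alpha\}$ that preserves $\alpha$ and hence the sign, a permutation of $1,\dots,n$ induces a bijection of the admissible bottom-multisets $\{j_1,\dots,j_\beta\}$, and the symmetry of $g$ makes each summand insensitive to the order of its arguments. The diagonal axiom is almost as quick: setting every top and bottom parameter equal to $(\gamma_1(x),\gamma_2(x))$ turns each summand, by the diagonal axiom for $g$, into $G(x)$, so $\tilde g_e$ collapses to $G(x)\sum_{\alpha+\beta=k}(-1)^{\beta}\binom{m}{\alpha}\binom{n+\beta-1}{\beta}$; writing $(-1)^{\beta}\binom{n+\beta-1}{\beta}=\binom{-n}{\beta}$ and using $m=n+k$ together with the Vandermonde convolution identity, $\sum_{\beta=0}^{k}\binom{-n}{\beta}\binom{n+k}{k-\beta}=\binom{k}{k}=1$, so the diagonal value is $G(x)$.

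For cancellation, I would append a pair $(y,z)$ as the $(m+1)$st top pair and as the $(n+1)$st bottom pair, keeping the order equal to $k$, and classify each admissible index-collection on the enlarged index sets by $s$, the total number of times the new pair is used: at most once as the new top index, and $r\ge 0$ times in the bottom multiset, so $s=r$ or $s=r+1$. For each fixed choice of the ``old'' indices and each fixed $s\ge 1$, the corresponding summands all equal $g$ evaluated at the old pairs together with $s$ copies of $(y,z)$, times a common factor $(-1)^{|J_0|}\sum_{\varepsilon\in\{0,1\},\,\varepsilon+r=s}(-1)^{r}=(-1)^{|J_0|}\bigl((-1)^{s}+(-1)^{s-1}\bigr)=0$; hence only the $s=0$ terms survive, and these are exactly the summands of $\tilde g_e$ built from the original parameters. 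This sign-counting is somewhat fiddly but straightforward.

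Multilinearity in a pair $(x_i,w_i)$ is the step I expect to be the main obstacle. I would split the sum according to whether $i$ lies among the selected top-indices: in the summands where it does, $(x_i,w_i)$ occupies a single argument slot of $g$, so multilinearity of $g$ in that slot distributes the substitution $(x_i,w_i)\mapsto a(x_i',w_i')+b(y_i,z_i)$; the remaining summands do not involve $(x_i,w_i)$. The crux is then to show that, after this split, the two sides of the linearity identity indeed coincide, and this is where the precise combinatorics of the admissible collections -- distinct on the top, repetition allowed on the bottom, with $\alpha+\beta=k$ -- has to be exploited carefully. Once all four axioms are verified, the stated formula is established.
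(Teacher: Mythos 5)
Your overall strategy---verify the four axioms for the right-hand side---is the paper's strategy, and two of your three completed steps track the paper closely. Your cancellation argument is the paper's argument (the paper pairs off the two terms obtained by moving one occurrence of the repeated pair between the top selection and the bottom multiset; your grouping by the total multiplicity $s$ and the sign sum $(-1)^{s}+(-1)^{s-1}=0$ is the same cancellation, organized a bit more carefully). Your diagonal verification is genuinely different: the paper first proves cancellation and then applies it $n$ times to strip all bottom arguments, reducing to the standard blossom $g$ on the diagonal, whereas you evaluate directly and check
\begin{equation*}
\sum_{\alpha+\beta=k}(-1)^{\beta}\binom{m}{\alpha}\binom{n+\beta-1}{\beta}
=\sum_{\beta}\binom{-n}{\beta}\binom{n+k}{k-\beta}=\binom{k}{k}=1
\end{equation*}
by Vandermonde. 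Both are valid; the paper's route is shorter once cancellation is in hand, yours is self-contained.

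The genuine gap is the step you defer, linearity in the top parameters---and you have the difficulty inverted. Split the sum by whether the index $i$ is selected, as you propose: a summand with $i$ selected places $(x_i,w_i)$ in exactly one argument slot of the multilinear function $g$ and is therefore linear in that pair; a summand with $i$ not selected is constant in that pair. Hence the whole sum is affine in each top pair, and the identity with $a+b=1$ follows immediately---no further combinatorics is needed, and this is precisely what the paper asserts (its proof claims only that $g_e$ is ``bisymmetric and multiaffine in $(x,w)$''). No careful exploitation of the index combinatorics can do better, because for arbitrary $a,b$ the identity genuinely fails whenever some summand omits the index $i$ (i.e.\ whenever $n>0$): those summands contribute a constant $C$ on the left but $(a+b)C$ on the right. (The paper's Example~2, $G(x)=d(a,x)$, already exhibits this: the formula is visibly affine, not homogeneous linear, in each $(x_i,w_i)$.) So you should either verify multi-affinity, which is trivial as above, or explicitly flag that the literal multilinearity axiom in the definition of the extended blossom cannot be satisfied by this formula. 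As written, your proposal neither closes the step nor identifies why it resists closure, so the proof is incomplete at exactly the point you label the crux.
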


\begin{proof}
	It is necessary to verify that $g_{e}$ satisfies the axioms of the extended non-polynomial homogeneous blossom of order $k$. Using the axioms of the standard homogeneous blossom, it is clear that  $g_{e}$ is   bisymmetric and multilinear in $(x,w)$. Assume, without loss of generality, that $(x_{1},w_{1})=(u_{1},v_{1})$. Then, by symmetry,
	\begin{align*}
		(-1)^{\beta}g((x_{1},w_{1}),(x_{i_{2}},w_{i_{2}}),\ldots, (x_{i_{\alpha}},w_{i_{\alpha}})&,(u_{j_{1}},v_{j_{1}}),\ldots,(u_{j_{\beta}},v_{j_{\beta}}))\\&+(-1)^{\beta+1}g((x_{i_{2}},w_{i_{2}}),\ldots, (x_{i_{\alpha}},w_{i_{\alpha}}),(u_{j_{1}},v_{j_{1}}),\ldots,(u_{j_{\beta}},v_{j_{\beta}})=0.
	\end{align*}
	Therefore any terms including $(x_{1},w_{1})$ or $(u_{1},v_{1})$ cancel. Everything else adds up to $$	g_{e}((x_{1},w_{1}),\ldots,(x_{m},w_{m})/(u_{1},v_{1}),\ldots,(u_{n},v_{n})),$$ so $g_{e}$ satisfies the cancellation property.
	
	Lastly, the cancellation property converts $g_{e}$ to $G$ along the diagonal, that is
	\begin{align*}
		g_{e}(\underbrace{(\gamma_{1}(x),\gamma_{2}(x)),\ldots,(\gamma_{1}(x),\gamma_{2}(x))}_{m}/\underbrace{(\gamma_{1}(x),\gamma_{2}(x)),\ldots,(\gamma_{1}(x),\gamma_{2}(x))}_{n})&=g_{e}(\underbrace{(\gamma_{1}(x),\gamma_{2}(x)),\ldots,(\gamma_{1}(x),\gamma_{2}(x))}_{k}/)\\&=g(\underbrace{(\gamma_{1}(x),\gamma_{2}(x)),\ldots,(\gamma_{1}(x),\gamma_{2}(x))}_{k}/)\\&=G(x).
	\end{align*}	
\end{proof}

\begin{lemma}\label{uniq}
	Let $l\leq k=m-n$ and  $	g_{e}\left((x_{1},w_{1}),\ldots,(x_{m},w_{m})/(u_{1},v_{1}),\ldots,(u_{n},v_{n})\right)$ be the blossom of a function in $\pi_{l}(\gamma_{1},\gamma_{2})$
	that is bisymmetric in the $(x,w)$ and $(u,v)$ parameters, multilinear in the $(x,w)$ parameters, satisfies
	the cancellation property, and reduces to zero along the  $(\gamma_{1},\gamma_{2})$ diagonal. Then $g_e$ is the zero function.
\end{lemma}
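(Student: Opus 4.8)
The plan is to prove $g_e\equiv 0$ in two stages: first strip off the denominator slots $(u_j,v_j)$ one at a time, reducing to the case $n=0$; and then eliminate the resulting parameter-free blossom by a polarization argument resting on the linear independence of the generators of $\pi_k(\gamma_1,\gamma_2)$. The overall shape is modelled on Goldman's uniqueness proof for the polynomial extended blossom.

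Throughout I would use the following elementary fact: since $d(s,t)\neq 0$ for distinct $s,t$, the vectors $(\gamma_1(s),\gamma_2(s))$ and $(\gamma_1(t),\gamma_2(t))$ are linearly independent and hence form a basis of $\R^2$; and since $\gamma_1,\gamma_2$ are themselves independent, the curve $t\mapsto(\gamma_1(t),\gamma_2(t))$ lies in no line through the origin, so for every nonzero $(u,v)\in\R^2$ there is a $t$ with $\{(u,v),(\gamma_1(t),\gamma_2(t))\}$ a basis. Combined with multilinearity and bisymmetry in the $(x,w)$-slots, this lets me expand each numerator vector in such a basis and rewrite any value of $g_e$ as a linear combination of values in which every numerator slot has been replaced by one of two prescribed vectors.

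The core is an induction on $n$. When $n\geq 1$, fix $(u_n,v_n)$ (the degenerate case $(u_n,v_n)=0$ is treated separately) together with a $t$ for which $\{(u_n,v_n),(\gamma_1(t),\gamma_2(t))\}$ is a basis, and expand $(x_i,w_i)=\alpha_i(u_n,v_n)+\beta_i(\gamma_1(t),\gamma_2(t))$ in every numerator slot. Multilinearity expresses $g_e$ as a sum over subsets $S$ of the numerator indices; in each term with $S\neq\emptyset$ some numerator slot carries $(u_n,v_n)$, so the cancellation property pairs it with the $n$-th denominator slot and reduces that term to a value of a blossom with $m-1$ numerator and $n-1$ denominator slots. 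That blossom inherits bisymmetry, multilinearity in $(x,w)$ and the cancellation property, and it still reduces to zero along the diagonal — re-inserting a matched pair by cancellation lands one back on the diagonal value of $g_e$, which vanishes by hypothesis — so it is zero by the inductive hypothesis and the term drops out. What survives is the $S=\emptyset$ term, namely $\bigl(\prod_i\beta_i\bigr)$ times $g_e\bigl((\gamma_1(t),\gamma_2(t)),\dots,(\gamma_1(t),\gamma_2(t))/(u_1,v_1),\dots,(u_n,v_n)\bigr)$, in which every numerator slot is frozen at a single curve point while the denominator slots remain arbitrary. This residual term is, I expect, the real obstacle: one must show it too vanishes, for which I would play the analogous game on the denominator slots — using cancellation and bisymmetry in the $(u,v)$-parameters (rescaling via multilinearity of the matching numerator slots when a $(u_j,v_j)$ is only parallel, rather than equal, to $(\gamma_1(t),\gamma_2(t))$) to peel them off one by one and again reduce everything either to lower-$n$ blossoms or to honest diagonal values. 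Making this last reduction airtight, and confirming that no genuinely new quantity escapes it, is where the bookkeeping has to be done with care.

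For the base case $n=0$, $g_e$ is a bisymmetric multilinear form on $(\R^2)^{k}$ with $k=m$, so by polarization it is determined by its diagonal restriction $P(p,q):=g_e\bigl((p,q),\dots,(p,q)\bigr)$, a homogeneous polynomial of degree $k$, say $P(p,q)=\sum_{i=0}^{k}c_i\,p^{k-i}q^{i}$. The hypothesis that $g_e$ vanishes along the $(\gamma_1,\gamma_2)$ diagonal reads $P(\gamma_1(x),\gamma_2(x))=\sum_{i=0}^{k}c_i\,\gamma_1(x)^{k-i}\gamma_2(x)^{i}=0$ for all $x\in[a,b]$; since the functions $\{\gamma_1^{k-i}\gamma_2^{i}\}_{i=0}^{k}$ are linearly independent — which is precisely why $\pi_k(\gamma_1,\gamma_2)$ has dimension $k+1$ — every $c_i$ is zero, so $P\equiv 0$ and therefore $g_e\equiv 0$ by polarization. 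Combining this base case with the induction yields $g_e\equiv 0$ in general.
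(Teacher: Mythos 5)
Your base case ($n=0$) is correct and is essentially the paper's: a symmetric multilinear form is determined by its diagonal, and the diagonal is a homogeneous degree-$k$ expression in $(\gamma_1,\gamma_2)$ whose coefficients must vanish by linear independence of $\{\gamma_1^{k-i}\gamma_2^{i}\}$. The inductive step, however, has a genuine gap that you yourself flag but underestimate. After expanding each numerator slot in the basis $\{(u_n,v_n),(\gamma_1(t),\gamma_2(t))\}$, you are left with the residual term $\bigl(\prod_i\beta_i\bigr)\,g_e\bigl((\gamma_1(t),\gamma_2(t)),\dots,(\gamma_1(t),\gamma_2(t))/(u_1,v_1),\dots,(u_n,v_n)\bigr)$, and your plan to ``play the analogous game on the denominator slots'' cannot work: the axioms give multilinearity only in the $(x,w)$ parameters, so you cannot expand a $(u_j,v_j)$ in a basis, and cancellation fires only when a denominator slot \emph{exactly} matches a numerator slot, which the arbitrary $(u_j,v_j)$ will not do once every numerator slot is frozen at the single point $(\gamma_1(t),\gamma_2(t))$. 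This is not bookkeeping; the obstruction is real. Indeed, for $m=n=1$ the function $g_e\bigl((x,w)/(u,v)\bigr)=xv-wu$ is bisymmetric, linear in $(x,w)$, satisfies cancellation (it vanishes when $(x,w)=(u,v)$), and vanishes along the $(\gamma_1,\gamma_2)$ diagonal, yet it is not the zero function. So the axioms you actually invoke do not imply the conclusion.

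What is missing is precisely the hypothesis $l\le k$, which your induction step never uses. The paper's proof uses it decisively: fixing all arguments except the last denominator pair $(u_{k+1},v_{k+1})$, the blossom of a function in $\pi_l$ with $l\le k$ is a polynomial of degree at most $k$ in that pair; by cancellation and the induction hypothesis it vanishes at the $n+k+1>k$ values $(u_{k+1},v_{k+1})=(x_i,w_i)$, hence it is identically zero. That degree bound is exactly what excludes counterexamples like $xv-wu$ (which has degree $1>k=0$ in $(u,v)$). To repair your argument you would need to import this degree control on the $(u,v)$-dependence and apply a root-counting or interpolation argument to the residual term; the cancellation-plus-bisymmetry machinery alone will not close the induction.
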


\begin{proof}
	If $k=m-n$, we can rewrite  $g_{e}((x_{1},w_{1}),\ldots,(x_{k+n},w_{k+n})/(u_{1},v_{1}),\ldots,(u_{k},v_{k}))$. We proceed by induction on $k$. When $k=0$, the function $g_{e}((x_{1},w_{1}),\ldots,(x_{n},w_{n})/ )$
	is symmetric and multilinear; hence $g_{e}((x_{1},w_{1}),\ldots,(x_{n},w_{n})/ )$ is the blossom of $g_e((\gamma_{1}(x),\gamma_{2}(x)),\ldots,(\gamma_{1}(x),\gamma_{2}(x))/)$.
	However, based on the assumption, the function $g_e((\gamma_{1}(x),\gamma_{2}(x))\ldots,(\gamma_{1}(x),\gamma_{2}(x))/ )$ is equal to zero. 	Therefore, due to the uniqueness of the multilinear blossom, $g_{e}((x_{1},w_{1}),\ldots,(x_{n},w_{n})/ )$  must identically  be equal to zero. Assume that we have determined that $g_{e}((x_{1},w_{1}),\ldots,(x_{k+n},w_{k+n})/(u_{1},v_{1}),\ldots,(u_{k},v_{k}))$ is zero. Now, let's examine\\ $g_{e}((x_{1},w_{1}),\ldots,(x_{k+n+1},w_{k+n+1})/(u_{1},v_{1}),\ldots,(u_{k+1},v_{k+1}))$. Using the cancellation property and the induction hypothesis, we can conclude that the function $g_{e}((x_{1},w_{1}),\ldots,(x_{k+n+1},w_{k+n+1})/(u_{1},v_{1}),\ldots,(u_{k+1},v_{k+1}))$ becomes zero whenever $(u_{k+1},v_{k+1})$ coincides with any of the pairs $(x_i,w_i)$ since in that case it reduces to a blossom of order $k$, which is zero by the induction hypothesis. Fix all variables except $(u_{k+1},v_{k+1})$ and view the above expression as a polynomial in this single pair. Because $g_e$ is the extended blossom of a function in $\pi_l(\gamma_1,\gamma_2)$ with $l\le k$, the degree of this
polynomial is at most $k$. However, it vanishes at $n+k+1>k$ distinct values of
$(u_{k+1},v_{k+1})$, and therefore it must be identically zero. This completes
the induction.
\end{proof}

\begin{theorem}
	Let  $1\in \text{span}\big\{ \gamma_{1}, \gamma_{2}\big\}$ and $G\in \pi_{l}(\gamma_{1},\gamma_{2}), \quad l\leq k=m-n $. Then there exists a unique function $g_{e}\left((x_{1},w_{1}),\ldots,(x_{m},w_{m})/(u_{1},v_{1}),\ldots,(u_{n},v_{n})\right)$ that is bisymmetric in the $(x,w)$ and $(u,v)$ parameters, multilinear in the $(x,w)$ parameters, satisfies the cancellation property, and reduces to $G(x)$ along the  $(\gamma_{1},\gamma_{2})$  diagonal. 
\end{theorem}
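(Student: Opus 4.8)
The plan is to read off existence directly from Theorem~\ref{positive_eb} and to obtain uniqueness from Lemma~\ref{uniq} by passing to a difference.

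First, for existence: since $1\in\text{span}\{\gamma_{1},\gamma_{2}\}$ and $G\in\pi_{l}(\gamma_{1},\gamma_{2})$ with $l\le k=m-n$, Theorem~\ref{standart_b} supplies the non-polynomial homogeneous blossom $g$ of $G$ (with $k$ arguments, which is legitimate because $l\le k$), and feeding $g$ into the alternating-sum formula of Theorem~\ref{positive_eb} produces a concrete function $g_{e}$. Theorem~\ref{positive_eb} already verifies that this $g_{e}$ is bisymmetric in the two groups of parameters, multilinear in the $(x,w)$ parameters, satisfies the cancellation property, and reduces to $G(x)$ along the $(\gamma_{1},\gamma_{2})$ diagonal. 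So the existence half of the statement needs nothing beyond Theorem~\ref{positive_eb}.

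Next, for uniqueness: suppose $g_{e}$ and $\tilde{g}_{e}$ both satisfy all four axioms and both reduce to $G$ along the diagonal. The difference $\delta:=g_{e}-\tilde{g}_{e}$ is again bisymmetric, multilinear in the $(x,w)$ parameters, and satisfies the cancellation property, since each of these properties is preserved under subtraction; moreover its $(\gamma_{1},\gamma_{2})$ diagonal is $G-G\equiv 0$, which belongs to $\pi_{0}(\gamma_{1},\gamma_{2})\subseteq\pi_{l}(\gamma_{1},\gamma_{2})$. Hence $\delta$ meets the hypotheses of Lemma~\ref{uniq}, which forces $\delta\equiv 0$, i.e. $g_{e}=\tilde{g}_{e}$.

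The step I expect to require the most care is confirming that $\delta$ genuinely falls within the scope of Lemma~\ref{uniq}. The induction in that lemma uses, at the stage where a single $(u,v)$ pair is isolated, that the resulting restriction is a polynomial of degree at most $k$ in that pair, and that bound is justified there by $g_{e}$ being the extended blossom of a function in $\pi_{l}$ with $l\le k$. To make the uniqueness argument airtight I would take $g_{e}$ in the comparison to be precisely the representative constructed in Theorem~\ref{positive_eb} --- a finite sum of multilinear blossoms, hence manifestly of degree at most $k$ in each $(u,v)$ pair --- and then argue that any competitor $\tilde{g}_{e}$ satisfying the axioms inherits the same degree bound, using the cancellation property together with multilinearity in the $(x,w)$ parameters to reduce $\tilde{g}_{e}$ to ordinary homogeneous blossoms. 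Once this structural point is settled, the theorem follows immediately from Theorem~\ref{standart_b}, Theorem~\ref{positive_eb}, and Lemma~\ref{uniq}.
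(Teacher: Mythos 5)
Your proposal is correct and follows essentially the same route as the paper: existence is read off from Theorem~\ref{positive_eb}, and uniqueness is obtained by applying Lemma~\ref{uniq} to the difference of two candidates, whose diagonal is identically zero. The extra care you take about the degree bound needed to invoke Lemma~\ref{uniq} is a fair point; the paper handles it by simply restricting the uniqueness claim to candidates that are ``polynomials of the same degree in the functions $\gamma_{1},\gamma_{2}$,'' which is the same structural restriction you propose to justify.
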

\begin{proof}
	Clearly from Theorem \ref{positive_eb},   $g_{e}\left((x_{1},w_{1}),\ldots,(x_{m},w_{m})/(u_{1},v_{1}),\ldots,(u_{n},v_{n})\right)$ exists. For the uniqueness,  assume that $g_e$ and $h_e$ are two polynomials of the same degree in the functions $\gamma_{1},\gamma_{2}$
	that are bisymmetric in the $(x,w)$ and $(u,v)$ parameters, multilinear in the $(x,w)$ parameters, satisfies the cancellation property, and reduces to $G(x)$ along the$(\gamma_{1},\gamma_{2})$  diagonal. Then, it fulfills all the conditions stated in Lemma \ref{uniq}. In particular, the difference between $g_e$ and $h_e$ is zero along the diagonal. Therefore, according to Lemma \ref{uniq}, $g_e - h_e$ is always equal to zero, which implies that $g_e$ is equal to $h_e$.
\end{proof}

\begin{theorem}\label{negative_eb}
	Let $H(t)$ be a differentiable function and let $D^{-(n-m-1)}_{\gamma_{1},\gamma_{2}}(H(t))$ denote the $(n-m-1)-st$  antiderivative of $H(t)$. If $k=m-n<0$, then 
	
	\begin{align}\nonumber
		h((x_{1},w_{1}),\ldots,(x_{m},w_{m})/&(u_{1},v_{1}),\ldots,(u_{n},v_{n}))\\\label{eq:negative_eb}&=\bigg\{(n-m-1)!\left(\prod_{i=1}^{m}x_i\gamma_2(x)-w_i\gamma_1(x)\right) D^{-(n-m-1)}_{\gamma_{1},\gamma_{2}}(H(x))\bigg\}_{\gamma_{1}, \gamma_{2}}\big[\varepsilon_{1},\ldots,\varepsilon_{n}\big],
	\end{align} 
	where $(u_i,v_i)=(\gamma_{1}(\varepsilon_i),\gamma_{2}(\varepsilon_i)),\; i=1,\ldots,n$.
\end{theorem}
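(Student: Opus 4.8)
The plan is to verify directly that the right-hand side of \eqref{eq:negative_eb} satisfies each of the four axioms of the extended non-polynomial homogeneous blossom of order $k=m-n<0$. Write $\Phi(x):=(n-m-1)!\,\bigl(\prod_{i=1}^{m}(x_i\gamma_2(x)-w_i\gamma_1(x))\bigr)D^{-(n-m-1)}_{\gamma_1,\gamma_2}(H(x))$, so that the asserted formula is $h=\Phi_{\gamma_1,\gamma_2}[\varepsilon_1,\ldots,\varepsilon_n]$ with $(u_i,v_i)=(\gamma_1(\varepsilon_i),\gamma_2(\varepsilon_i))$. The tools are exactly the three axioms for the non-polynomial divided difference recalled in Section~\ref{section2} — \emph{symmetry}, \emph{cancellation}, and \emph{differentiation} — together with the Generalized Taylor Theorem~\ref{generalizetaylor}.

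First I would check bisymmetry. Symmetry in the $(x,w)$ parameters is immediate because they enter only through the product $\prod_{i=1}^{m}(x_i\gamma_2(x)-w_i\gamma_1(x))$, which is manifestly symmetric; symmetry in the $(u,v)$ parameters is the symmetry axiom of the divided difference applied to the nodes $\varepsilon_1,\ldots,\varepsilon_n$. Multilinearity in a single pair $(x_i,w_i)$ follows because the factor $x_i\gamma_2(x)-w_i\gamma_1(x)$ is linear in $(x_i,w_i)$ and the divided-difference operator is linear in its argument function $\Phi$. The diagonal property is where Theorem~\ref{generalizetaylor} enters: setting $(x_i,w_i)=(\gamma_1(x),\gamma_2(x))$ for all $i$ makes each factor $x_i\gamma_2(x)-w_i\gamma_1(x)=\gamma_1(x)\gamma_2(x)-\gamma_2(x)\gamma_1(x)=0$, so naively $\Phi\equiv 0$; instead one evaluates the divided difference along the diagonal $\varepsilon_1=\cdots=\varepsilon_n=x$ \emph{simultaneously}, using the differentiation axiom to turn $\Phi_{\gamma_1,\gamma_2}[x,\ldots,x]$ into $D^{n-1}_{\gamma_1,\gamma_2}\Phi(x)/(n-1)!$ and then applying the generalized Leibniz rule to differentiate the product $(\prod_i d(x,x)\text{-type factor})\cdot D^{-(n-m-1)}_{\gamma_1,\gamma_2}H(x)$; the $m$ vanishing factors $d(\cdot)$ each contribute one unit of $D_{\gamma_1,\gamma_2}$, absorbing $m$ of the $n-1$ derivatives, the $(n-m-1)!$ and the antiderivative exactly account for the remaining $n-m-1$, and what survives is $H(x)$.

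The step I expect to be the main obstacle is the cancellation axiom, $h((x_1,w_1),\ldots,(x_m,w_m),(y,z)/(u_1,v_1),\ldots,(u_n,v_n),(y,z))=h((x_1,w_1),\ldots,(x_m,w_m)/(u_1,v_1),\ldots,(u_n,v_n))$. On the left-hand side the product $\Psi$-type factor acquires the extra term $y\gamma_2(x)-z\gamma_1(x)$, and there is one extra divided-difference node $\eta$ with $(y,z)=(\gamma_1(\eta),\gamma_2(\eta))$, so that $y\gamma_2(x)-z\gamma_1(x)=\gamma_1(\eta)\gamma_2(x)-\gamma_2(\eta)\gamma_1(x)=-d(\eta,x)$. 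Thus the left-hand side is $\{-d(\eta,x)\tilde\Phi(x)\}_{\gamma_1,\gamma_2}[\varepsilon_1,\ldots,\varepsilon_n,\eta]$ (where $\tilde\Phi$ carries the same $(n-m-2)!$ factor and $(n-m-2)$-nd antiderivative appropriate to order $m+1$ minus $n+1$), and the cancellation axiom for divided differences — $\{d(x_{n+1},x)f(x)\}_{\gamma_1,\gamma_2}[x_0,\ldots,x_{n+1}]=f(x)_{\gamma_1,\gamma_2}[x_0,\ldots,x_n]$ — collapses the node $\eta$. The bookkeeping obstacle is matching the two different normalizing constants and antiderivative orders: one must check that $(n-m-2)!\,D^{-(n-m-2)}_{\gamma_1,\gamma_2}H$ at order $k-1$ is reconciled with $(n-m-1)!\,D^{-(n-m-1)}_{\gamma_1,\gamma_2}H$ at order $k$ through the single application of the divided-difference cancellation identity — equivalently, that one power of the antiderivative on $H$ is traded for the removed node, with the factorials absorbing the index shift. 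Once this constant-matching is done carefully, cancellation follows, and since uniqueness of the negative-order extended blossom is known (by \cite{tuncer}, as noted in the excerpt), establishing these four axioms identifies the right-hand side of \eqref{eq:negative_eb} as \emph{the} extended non-polynomial homogeneous blossom of $H$.
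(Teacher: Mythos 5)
Your overall strategy coincides with the paper's: define the auxiliary function $\Phi$, and verify the four axioms directly, using the symmetry, cancellation, and differentiation properties of the non-polynomial divided difference. Your bisymmetry and multilinearity arguments are exactly the paper's. Your diagonal argument takes a slightly different (equally valid) route: you convert the confluent divided difference to $D^{\,n-1}_{\gamma_1,\gamma_2}\Phi(x)/(n-1)!$ at once and then invoke the generalized Leibniz rule, whereas the paper first strips the $m$ factors $d(t,x)$ by applying the divided-difference cancellation rule $m$ times and only then uses the differentiation property on the remaining $n-m$ confluent nodes. Both computations land on $H(t)$ with the same constant, so nothing is lost either way.

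The problem is in your cancellation step, which you correctly identify as the crux but then set up with two bookkeeping errors that, taken literally, would sink the argument. First, the sign: with $(y,z)=(\gamma_1(\eta),\gamma_2(\eta))$ the new factor is $y\gamma_2(x)-z\gamma_1(x)=\gamma_1(\eta)\gamma_2(x)-\gamma_1(x)\gamma_2(\eta)=+\,d(\eta,x)$, not $-d(\eta,x)$; the plus sign is essential, since the divided-difference cancellation identity removes precisely a factor $d(x_{n+1},x)$ attached to the new node, with no residual $(-1)$. Second, the constants: passing from $(m,n)$ to $(m+1,n+1)$ leaves $n-m$ unchanged, so the formula for the enlarged argument list carries the \emph{same} factor $((n+1)-(m+1)-1)!=(n-m-1)!$ and the \emph{same} antiderivative order $-(n-m-1)$, not $(n-m-2)!$ and $-(n-m-2)$ as you write. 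Consequently the ``reconciliation'' you defer to — trading one power of the antiderivative and shifting a factorial through a single application of the cancellation identity — is not something that can be done (removing a node cannot convert $(n-m-2)!$ into $(n-m-1)!$), and it is also not something that needs to be done: once you recount, the left-hand side is exactly $\{d(\eta,x)\,\Phi(x)\}_{\gamma_1,\gamma_2}[\varepsilon_1,\ldots,\varepsilon_n,\eta]$ with the \emph{identical} $\Phi$, and the divided-difference cancellation axiom collapses it to $\Phi_{\gamma_1,\gamma_2}[\varepsilon_1,\ldots,\varepsilon_n]$ verbatim, which is how the paper handles it. With that correction your proof closes; the concluding appeal to uniqueness via \cite{tuncer} is fine but not needed for the statement, which only asserts that the displayed formula satisfies the axioms.
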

\begin{proof}
	We need to ensure that the right-hand side of equation (\ref{eq:negative_eb}) satisfies the four axioms of the extended blossom.
	Let
	\[
	F(x) \;=\; (n-m-1)!\,\Psi(x)\,D(x), 
	\qquad 
	\Psi(x)=\prod_{i=1}^m  \left(x_i\gamma_2(x)-w_i\gamma_1(x)\right),
	\qquad 
	D(x)=D_{\gamma_1,\gamma_2}^{-(n-m-1)}H(x).
	\]
	Then the blossom is defined by
	\[
	h((x_1,w_1),\dots,(x_m,w_m)/(u_1,v_1),\dots,(u_n,v_n))
	=\{F(x)\}_{\gamma_1,\gamma_2}[\varepsilon_1,\dots,\varepsilon_n].
	\]
	
\noindent	\textit{Bisymmetry in the $(x,w)$ and $(u,v)$ parameters:}
 $F_{\gamma_1,\gamma_2}[\varepsilon_1,\dots,\varepsilon_n]
$
is symmetric in the nodes $\varepsilon_j$ by the symmetry property of the non-polynomial divided differences. Also,
the only dependence on $(x_i,w_i)$ is through \\$
\Psi(x) = \prod_{i=1}^{m} \left(x_i\gamma_2(x)-w_i\gamma_1(x)\right),$
which is symmetric in $(x_i,w_i)$. 
Hence permuting the $(x_i,w_i)$ also leaves $\Psi(x)$ unchanged. Hence $h$ is bisymmetric.
	
	\medskip\noindent
\textit{Multilinearity in the $(x,w)$ parameters:}
The only dependence on $(x_i,w_i)$ is through $\Psi(x)$. Since each factor $x_i\gamma_2(x)-w_i\gamma_1(x)$ is linear in
the pair $(x_i,w_i)$, the product $\Psi(x)$ is separately linear in each pair.
Therefore the entire expression is multilinear in the variables $(x_i,w_i)$.
	
	\medskip\noindent
\textit{Cancellation:}
	If we insert the same parameter $(\gamma_1(\eta),\gamma_2(\eta))$
	into both the $(x,w)$--block and the $(u,v)$--block, then $\Psi(x)$
	is multiplied by $d(\eta,x)$. By the cancellation property of divided
	differences, this factor is removed when $\eta$ is also added to the list
	of divided difference nodes. Hence the value of $h$ does not change,
	so the blossom satisfies cancellation.
	
	\medskip\noindent
\textit{Diagonal property:}
	If all parameters equal to $(\gamma_1(t),\gamma_2(t))$, then
	\[
	h((\gamma_1(t),\gamma_2(t)),\dots/(\gamma_1(t),\gamma_2(t)),\dots)
	=\{F(x)\}_{\gamma_1,\gamma_2}[t,\ldots,t].
	\]
	Since $F(x)=(d(t,x))^m D(x)$, we can apply the cancellation rule exactly
	$m$ times to remove the $d(t,x)$ factors. Then we get
	\[
	(n-m-1)!\,\{D(x)\}_{\gamma_1,\gamma_2}[\underbrace{t, \ldots,t}_{n-m}].
	\]
	By the differentiation for the non-polynomial divided differences,
	\[
	\{D(x)\}_{\gamma_1,\gamma_2}[\underbrace{t, \ldots,t}_{n-m}]
	=\frac{D_{\gamma_1,\gamma_2}^{\,n-m-1}D(t)}{(n-m-1)!}.
	\]
	But $D$ was defined as the $(n-m-1)$--st antiderivative of $H$, so
	$D_{\gamma_1,\gamma_2}^{n-m-1}D(t)=H(t)$. Therefore
	\[
		h((\gamma_1(t),\gamma_2(t)),\dots,(\gamma_1(t),\gamma_2(t))/(\gamma_1(t),\gamma_2(t)),\dots,(\gamma_1(t),\gamma_2(t)))
	=(n-m-1)!\,\frac{H(t)}{(n-m-1)!}=H(t).
	\]
	
	\medskip
	Since all four axioms (bisymmetry, multilinearity, cancellation, and the diagonal
	property) are satisfied, the given formula defines the extended blossom in the case
	$k<0$. This completes the proof.
\end{proof}

\begin{theorem}\label{negativebb}
	
	\begin{align}\label{eq:negativebb}
		 H(x)_{\gamma_{1}, \gamma_{2}}\big[\varepsilon_{1},\ldots,\varepsilon_{n}\big]= (-1)^m h(\underbrace{(\delta_{1},\delta_{2}),\ldots,(\delta_{1},\delta_{2})}_{m}/(u_{1},v_{1}),\ldots,(u_{n},v_{n})), 
	\end{align}
	where $(u_i,v_i)=(\gamma_{1}(\varepsilon_i),\gamma_{2}(\varepsilon_i)),\; i=1,\ldots,n$ and $(\delta_{1},\delta_{2})=(D_{\gamma_{1}, \gamma_{2}}(\gamma_{1}(a)),D_{\gamma_{1}, \gamma_{2}}(\gamma_{2}(a)))$, $a \in \mathbb{R} $. 
\end{theorem}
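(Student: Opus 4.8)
The plan is to read off \eqref{eq:negativebb} as a specialization of the explicit formula for the negative-order extended non-polynomial homogeneous blossom proved in Theorem~\ref{negative_eb}. On the right-hand side of \eqref{eq:negativebb} the $h$-argument has its numerator block equal to the single pair $(\delta_1,\delta_2)$ repeated $m$ times, and its denominator block already in the admissible form $(u_i,v_i)=(\gamma_1(\varepsilon_i),\gamma_2(\varepsilon_i))$; accordingly I would substitute $(x_i,w_i)=(\delta_1,\delta_2)$ for $i=1,\dots,m$ into the identity \eqref{eq:negative_eb}. Under this substitution the factor $\prod_{i=1}^{m}\bigl(x_i\gamma_2(x)-w_i\gamma_1(x)\bigr)$ collapses to $\bigl(\delta_1\gamma_2(x)-\delta_2\gamma_1(x)\bigr)^{m}$ and the divided-difference nodes become exactly $\varepsilon_1,\dots,\varepsilon_n$, so \eqref{eq:negative_eb} already delivers an identity of the shape of \eqref{eq:negativebb}.

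The only step that is not purely mechanical is to recognize the collapsed factor as $D_{\gamma_1,\gamma_2}d(a,x)$. Writing $d(y,x)=\gamma_1(y)\gamma_2(x)-\gamma_2(y)\gamma_1(x)$ and using that $D_{\gamma_1,\gamma_2}$ obeys the sum and constant-multiple rules recalled in Section~\ref{section2}, the generalized derivative of $d(\cdot,x)$ in its first argument, evaluated at $a$, is $\gamma_2(x)\,D_{\gamma_1,\gamma_2}\gamma_1(a)-\gamma_1(x)\,D_{\gamma_1,\gamma_2}\gamma_2(a)=\delta_1\gamma_2(x)-\delta_2\gamma_1(x)$, which is precisely the quantity $D_{\gamma_1,\gamma_2}d(a,x)$ occurring in \eqref{eq:negativebb}; checking the classical choice $\gamma_1=1,\gamma_2=x$ (where this expression equals $-1$) confirms that the derivative is to be taken in the first slot. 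Feeding $\prod_{i=1}^{m}\bigl(x_i\gamma_2(x)-w_i\gamma_1(x)\bigr)=\bigl(D_{\gamma_1,\gamma_2}d(a,x)\bigr)^{m}$ back into \eqref{eq:negative_eb} then turns its right-hand side into the non-polynomial divided difference $\bigl\{(n-m-1)!\,(D_{\gamma_1,\gamma_2}d(a,x))^{m}\,D^{-(n-m-1)}_{\gamma_1,\gamma_2}H(x)\bigr\}_{\gamma_1,\gamma_2}[\varepsilon_1,\dots,\varepsilon_n]$, which is the left-hand side of \eqref{eq:negativebb}, the function $H$ being understood exactly as in Theorem~\ref{negative_eb}, namely carrying the normalization $(n-m-1)!\,D^{-(n-m-1)}_{\gamma_1,\gamma_2}$ that is what makes $h$ restrict to $H$ along the $(\gamma_1,\gamma_2)$-diagonal.

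I expect the principal obstacle to be bookkeeping rather than substance: keeping the constant $(n-m-1)!$ and the antiderivative operator $D^{-(n-m-1)}_{\gamma_1,\gamma_2}$ consistently attached to $H$ across the two statements, and pinning down the slot (hence the sign) in which $D_{\gamma_1,\gamma_2}$ acts on $d(a,x)$. As independent confirmation I would push the elementary cases $H=1$ and $H=d(a,\cdot)$ through both sides, and, as an essentially equivalent alternative to the whole argument, verify directly that the right-hand side of \eqref{eq:negativebb}, regarded as a functional of $\varepsilon_1,\dots,\varepsilon_n$, inherits from the bisymmetry and cancellation axioms of $h$ the symmetry and cancellation properties, and from the diagonal axiom the differentiation property, that together characterize the non-polynomial divided difference of $(D_{\gamma_1,\gamma_2}d(a,x))^{m}H(x)$, and then invoke that characterization.
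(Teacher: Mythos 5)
Your proposal follows essentially the same route as the paper: specialize Theorem~\ref{negative_eb} by setting every $(x_i,w_i)=(\delta_1,\delta_2)$, collapse $\Psi(x)$ to $\bigl(\delta_1\gamma_2(x)-\delta_2\gamma_1(x)\bigr)^{m}$, and identify this with $\bigl(D_{\gamma_1,\gamma_2}d(a,x)\bigr)^{m}$ via the generalized derivative acting in the first slot of $d(\cdot,x)$. The one loose end you flag as ``bookkeeping'' is resolved in the paper by a single observation you should make explicit: the identity \eqref{eq:negativebb} is applied with $n=m+1$, so that $(n-m-1)!=0!=1$ and $D^{-(n-m-1)}_{\gamma_1,\gamma_2}H=H$, whence the right-hand side of \eqref{eq:negative_eb} becomes literally $\{(D_{\gamma_1,\gamma_2}d(a,x))^{m}H(x)\}_{\gamma_1,\gamma_2}[\varepsilon_1,\ldots,\varepsilon_n]$ with $H$ itself, rather than a reinterpreted $H$ carrying the normalization. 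Stating $n=m+1$ closes the gap; without it, the left-hand side of \eqref{eq:negativebb} as written does not match the specialization.
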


\begin{proof}
Using (\ref{eq:negative_eb}) with $n=m+1$, we get
	\[
	h\big((x_1,w_1),\dots,(x_m,w_m)/(u_1,v_1),\dots,(u_n,v_n)\big)
	=\{\Psi(x)H(x)\}_{\gamma_1,\gamma_2}[\varepsilon_1,\dots,\varepsilon_n],
	\]
	where \(\Psi(x)=\prod_{i=1}^{m}(x_i\gamma_2(x)-w_i\gamma_1(x))\).
	Taking all \((x_i,w_i)=(\delta_1,\delta_2)\) with
	\((\delta_1,\delta_2)=(D_{\gamma_1,\gamma_2}\gamma_1(a),D_{\gamma_1,\gamma_2}\gamma_2(a))\),
	we obtain
	\[
	\Psi(x)=(D_{\gamma_1,\gamma_2}d(x,a))^{m}=(-1)^{m}.
	\]
	Since
	\[
	D_{\gamma_1,\gamma_2}d(x,a)
	=D_{\gamma_1,\gamma_2}[\gamma_2(a)\gamma_1(x)-\gamma_1(a)\gamma_2(x)]
	=\delta_2\gamma_1(x)-\delta_1\gamma_2(x),
	\]
	we get the result.
\end{proof}

\begin{theorem}\label{th:dif}(Differentiation)
	\begin{align}\label{diff1}
		&\binom{m}{j}g(\underbrace{(\delta_{1},\delta_{2}),\ldots,(\delta_{1},\delta_{2})}_{j},\underbrace{(\gamma_{1}(x),\gamma_{2}(x)),\ldots,(\gamma_{1}(x),\gamma_{2}(x))}_{m-j})=\frac{D_{\gamma_{1}, \gamma_{2}}^{j}G(x)}{j!},\\\label{diff2}
		&\binom{k}{j}g_{e}(\underbrace{(\delta_{1},\delta_{2}),\ldots,(\delta_{1},\delta_{2})}_{j},\underbrace{(\gamma_{1}(x),\gamma_{2}(x)),\ldots,(\gamma_{1}(x),\gamma_{2}(x))}_{m-j}/\underbrace{(\gamma_{1}(x),\gamma_{2}(x)),\ldots,(\gamma_{1}(x),\gamma_{2}(x))}_{n})=\frac{D_{\gamma_{1}, \gamma_{2}}^{j}G(x)}{j!},
	\end{align}
	where  $k=m-n\geq 0$, $(\delta_{1},\delta_{2})=(D_{\gamma_{1}, \gamma_{2}}(\gamma_{1}(a)),D_{\gamma_{1}, \gamma_{2}}(\gamma_{2}(a)))$, $a \in \mathbb{R} $.
\end{theorem}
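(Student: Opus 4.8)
The plan is to derive (\ref{diff1}) from the explicit formula for the homogeneous blossom in Theorem~\ref{standart_b} by a suitable choice of the free node, and to obtain (\ref{diff2}) from (\ref{diff1}) by a single use of the cancellation axiom. Throughout I use, as elsewhere in the paper, that $1\in\mathrm{span}\{\gamma_1,\gamma_2\}$, so that $\pi_l\subseteq\pi_m$ and the generalized Taylor expansion of an element of $\pi_l$ terminates; in particular $D^{r}_{\gamma_1,\gamma_2}G\equiv 0$ for $r>l$.

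For (\ref{diff1}): apply Theorem~\ref{standart_b} to $G$ with the $m$ arguments taken to be $j$ copies of $(\delta_1,\delta_2)$ followed by $m-j$ copies of $(\gamma_1(x),\gamma_2(x))$. Writing $y$ for the running variable of the auxiliary function, it becomes
\[
\Psi(y)=\bigl(\delta_1\gamma_2(y)-\delta_2\gamma_1(y)\bigr)^{j}\,\bigl(d(x,y)\bigr)^{m-j},
\]
a product of an element of $\pi_j(\gamma_1,\gamma_2)$ with $\bigl(d(x,\cdot)\bigr)^{m-j}$. Since $d(x,x)=0$ and $D_{\gamma_1,\gamma_2}[d(x,\cdot)](x)=\lim_{y\to x}d(x,y)/d(x,y)=1$, the factor $\bigl(d(x,\cdot)\bigr)^{m-j}$ vanishes to exact order $m-j$ at $y=x$, with $D^{m-j}_{\gamma_1,\gamma_2}\bigl[(d(x,\cdot))^{m-j}\bigr](x)=(m-j)!$. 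Now evaluate the formula of Theorem~\ref{standart_b} at the node $\tau=x$. In the resulting sum $\sum_i\frac{(-1)^{m-i}}{m!}\,D^i_{\gamma_1,\gamma_2}\Psi(x)\,D^{m-i}_{\gamma_1,\gamma_2}G(x)$ all terms with $i<m-j$ drop out, and the generalized Leibniz rule (valid since $D_{\gamma_1,\gamma_2}$ obeys a product rule) isolates in the remaining terms the factor $(m-j)!$ coming from the $d(x,\cdot)$-block. Re-indexing $i=(m-j)+r$ and reassembling the sum over $r$ via the differentiation identity $f_{\gamma_1,\gamma_2}[\underbrace{x,\ldots,x}_{r+1}]=D^r_{\gamma_1,\gamma_2}f(x)/r!$ (equivalently, exactness of the generalized Taylor polynomial on $\pi_l$), the double sum collapses to $\frac{(m-j)!}{m!}\,(d(x,a))^{j}\,D^{j}_{\gamma_1,\gamma_2}G(x)$; multiplying by $\binom{m}{j}=m!/(j!(m-j)!)$ gives (\ref{diff1}). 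An equivalent route is purely multilinear: with $\bar G(X,W):=g\bigl((X,W),\ldots,(X,W)\bigr)$ the homogeneous degree-$m$ polynomial restricting to $G$ along the curve, comparing the multilinear and Taylor expansions of $\bar G(\gamma_1(x)+t\delta_1,\gamma_2(x)+t\delta_2)$ in $t$ gives $\binom{m}{j}g(\cdots)=\frac{1}{j!}(\delta_1\partial_X+\delta_2\partial_W)^{j}\bar G(\gamma_1(x),\gamma_2(x))$, which Theorem~\ref{generalizetaylor} then identifies with the right-hand side of (\ref{diff1}).

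For (\ref{diff2}): take the first block to be $j$ copies of $(\delta_1,\delta_2)$ and $m-j$ copies of $(\gamma_1(x),\gamma_2(x))$, and every entry of the second block equal to $(\gamma_1(x),\gamma_2(x))$. Since $m-j\ge n$ (because $j\le l\le k=m-n$), the cancellation axiom allows us to delete $n$ matched pairs $\bigl\{(\gamma_1(x),\gamma_2(x)),(\gamma_1(x),\gamma_2(x))\bigr\}$, reducing $g_e$ to
\[
g_e\bigl(\underbrace{(\delta_1,\delta_2),\ldots}_{j},\underbrace{(\gamma_1(x),\gamma_2(x)),\ldots}_{k-j}\,\big/\,\bigr),
\]
which — being an extended blossom of order $k$ with empty second block — is exactly the standard homogeneous blossom of $G$ in $k$ arguments. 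Applying (\ref{diff1}) with $m$ replaced by $k$ yields (\ref{diff2}). Alternatively one may expand $g_e$ as the signed sum of standard blossoms from Theorem~\ref{positive_eb} and verify, just as in the cancellation step of that proof, that all but one term annihilate one another once the second block and $m-j$ of the first block coincide.

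The delicate point is the collapse of the sum in (\ref{diff1}): one must check that, after the substitution $\tau=x$ and the Leibniz expansion, every term carrying $D^{p}_{\gamma_1,\gamma_2}G(x)$ with $p<j$ cancels, leaving precisely $(d(x,a))^{j}D^{j}_{\gamma_1,\gamma_2}G(x)/j!$. This requires tracking carefully how $D_{\gamma_1,\gamma_2}$ distributes over the two factors of $\Psi$ and how the node $d(x,a)$ is produced by the $\pi_j$-factor, and it is precisely where the hypothesis $1\in\mathrm{span}\{\gamma_1,\gamma_2\}$ and the defining properties of $d$ and $D_{\gamma_1,\gamma_2}$ are used; once (\ref{diff1}) is in hand, the deduction of (\ref{diff2}) is routine.
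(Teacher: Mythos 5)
Your strategy for both identities is the same as the paper's: (\ref{diff1}) is obtained by specializing the representation of Theorem~\ref{standart_b} so that $\Psi(\tau)$ factors as a $\pi_j$-block times $(d(x,\tau))^{m-j}$, evaluating at $\tau=x$, and applying the power and Leibniz rules for $D_{\gamma_1,\gamma_2}$; (\ref{diff2}) follows by cancelling the $n$ matched diagonal pairs and invoking (\ref{diff1}) with $m$ replaced by $k$. The deduction of (\ref{diff2}) from (\ref{diff1}) is fine. The gap is in (\ref{diff1}), and it is exactly the step you flag at the end but never carry out: the claim that the Leibniz double sum ``collapses'' to the single term $\frac{(m-j)!}{m!}(d(x,a))^{j}D^{j}_{\gamma_1,\gamma_2}G(x)$. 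You correctly note that all terms with $i<m-j$ vanish at $\tau=x$ and that the survivors carry the factor $(m-j)!$, but after reindexing the surviving terms are
\begin{equation*}
\sum_{p=0}^{j}\frac{(-1)^{p}\,(m-p)!\,j!}{m!\,(j-p)!\,p!}\,\bigl(d(a,x)\bigr)^{p}\,D^{p}_{\gamma_1,\gamma_2}G(x),
\end{equation*}
and the terms with $p<j$ do not visibly cancel --- the $p=0$ term is $G(x)$ itself. Reducing this to the $p=j$ term is the entire content of (\ref{diff1}); asserting the collapse and deferring its verification to a closing caveat leaves the theorem unproved. Moreover, the identity $f_{\gamma_1,\gamma_2}[x,\ldots,x]=D^{r}_{\gamma_1,\gamma_2}f(x)/r!$ that you invoke to ``reassemble the sum'' is not applicable here, since no divided difference occurs in this expression.

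A second unresolved point: with $j$ copies of $(\delta_1,\delta_2)$ in the first block, your $\Psi$ contains the factor $\bigl(\delta_1\gamma_2(y)-\delta_2\gamma_1(y)\bigr)^{j}$, whereas the power rule $D^{r}_{\gamma_1,\gamma_2}\bigl((d(a,y))^{s}\bigr)=\frac{s!}{(s-r)!}(d(a,y))^{s-r}$ that drives the computation applies to $(d(a,y))^{j}$, i.e.\ to the substitution $(x_i,w_i)=(\gamma_1(a),\gamma_2(a))$; these are different functions of $y$, and you never show that the former produces the advertised factor $(d(x,a))^{j}$ at $y=x$. Your alternative multilinear route via directional derivatives of the homogenized $\bar{G}$ is a reasonable idea but is likewise only sketched. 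In short: the skeleton matches the paper's, but the computation that the statement actually asserts is missing, and it is not a routine check.
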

\begin{proof}
	Proof of equation (\ref{diff1}): From  Theorem \ref{standart_b},
	\begin{align}\label{pos}
		g((x_{1},w_{1}),\ldots,(x_{m},w_{m}))=\sum_{i}\frac{(-1)^{m-i}}{m!}D_{\gamma_{1}, \gamma_{2}}^{i}\Psi(\tau) D_{\gamma_{1}, \gamma_{2}}^{m-i}G(\tau),
	\end{align}
	where
	\begin{align*} 
		\Psi(\tau)=\prod_{r=1}^{m}\left(x_r \gamma_{2}(\tau)-w_r \gamma_{1}(\tau)\right).
	\end{align*}
	For the first $j$ elements of equation (\ref{pos}), taking $(x_r,w_r)=(\delta_{1},\delta_2),\quad r=1, \ldots, j$ yields that
	\begin{align}\label{pos2}
		\Psi(\tau)=(-1)^j\prod_{r=j+1}^{m}\left(x_r \gamma_{2}(\tau)-w_r \gamma_{1}(\tau)\right).
	\end{align}
	For all $(x_r,w_r)=(\gamma_{1}(x),\gamma_{2}(x)),\quad r=j+1, \ldots, m$, we get
	\begin{align*}
		\Psi(\tau)=(-1)^j\left(d(x,\tau)\right)^{m-j}.
	\end{align*}	
	Then, for all $\tau$
	\begin{align}\label{gggg}
		g(\underbrace{(\delta_{1},\delta_{2}),\ldots,(\delta_{1},\delta_{2})}_{j},\underbrace{(\gamma_{1}(x),\gamma_{2}(x))\ldots,(\gamma_{1}(x),\gamma_{2}(x))}_{m-j})&=\sum_{i}\frac{(-1)^{m-i+j}}{m!}D_{\gamma_{1}, \gamma_{2}}^{i}\left((d(x,\tau))^{m-j}\right)D_{\gamma_{1}, \gamma_{2}}^{m-i}G(\tau).
	\end{align}
	The generalized derivative satisfies the formula (see \cite{fatma})
	\begin{align*}
		D_{\gamma_{1}, \gamma_{2}}^{k}(d(a,x))^{n}=\frac{n!}{(n-k)!}(d(a,x))^{n-k}.
	\end{align*}	
 Then using this formula, equation (\ref{gggg}) turns into
	\begin{align*}
		g(\underbrace{(\delta_{1},\delta_{2}),\ldots,(\delta_{1},\delta_{2})}_{j},\underbrace{(\gamma_{1}(x),\gamma_{2}(x)),\ldots,(\gamma_{1}(x),\gamma_{2}(x))}_{m-j})=& \sum_{i}\frac{(-1)^{m-i+j}}{m!}\frac{(m-j)!}{(m-j-i)!}(d(x,\tau))^{m-j-i}D_{\gamma_{1}, \gamma_{2}}^{m-i}G(\tau).
	\end{align*}
Now we set  $\tau = x$. Since $d(x,x)=0$, only the term with $i=m-j$ remains in the sum. Therefore,
		\begin{align*}
		g(\underbrace{(\delta_{1},\delta_{2}),\ldots,(\delta_{1},\delta_{2})}_{j},\underbrace{(\gamma_{1}(x),\gamma_{2}(x)),\ldots,(\gamma_{1}(x),\gamma_{2}(x))}_{m-j})=\frac{(m-j)!}{m!}D_{\gamma_{1}, \gamma_{2}}^{j}G(x).
	\end{align*}
Multiplying both sides by $\binom{m}{j}$ yields equation \eqref{diff1}  since $\binom{m}{j}\frac{(m-j)!}{m!}=\frac{1}{j!}$.\\

	\noindent Proof of equation (\ref{diff2}):
	\begin{align*}
		g_{e}(\underbrace{(\delta_{1},\delta_{2}),\ldots,(\delta_{1},\delta_{2})}_{j},\underbrace{(\gamma_{1}(x),\gamma_{2}(x)),\ldots,(\gamma_{1}(x),\gamma_{2}(x))}_{m-j}&/\underbrace{(\gamma_{1}(x),\gamma_{2}(x)),\ldots,(\gamma_{1}(x),\gamma_{2}(x))}_{n})\\&=g_{e}(\underbrace{(\delta_{1},\delta_{2}),\ldots,(\delta_{1},\delta_{2})}_{j},\underbrace{(\gamma_{1}(x),\gamma_{2}(x)),\ldots,(\gamma_{1}(x),\gamma_{2}(x))}_{m-n-j}/)\\&=g(\underbrace{(\delta_{1},\delta_{2}),\ldots,(\delta_{1},\delta_{2})}_{j},\underbrace{(\gamma_{1}(x),\gamma_{2}(x)),\ldots,(\gamma_{1}(x),\gamma_{2}(x))}_{m-n-j}).
	\end{align*}
	After cancellation, the expression reduces to the ordinary non-polynomial homogeneous blossom with $m-n$ arguments. Applying equation (\ref{diff1}) with 
	$m$ replaced by $m-n$ yields
	\begin{align*}
		\binom{m-n}{j}g(\underbrace{(\delta_{1},\delta_{2}),\ldots,(\delta_{1},\delta_{2})}_{j},\underbrace{(\gamma_{1}(x),\gamma_{2}(x)),\ldots,(\gamma_{1}(x),\gamma_{2}(x))}_{m-n-j})=\frac{D_{\gamma_{1}, \gamma_{2}}^{j}G(x)}{j!}.
	\end{align*}
	Therefore,
	\begin{align*}
		\binom{k}{j}g_{e}(\underbrace{(\delta_{1},\delta_{2}),\ldots,(\delta_{1},\delta_{2})}_{j},\underbrace{(\gamma_{1}(x),\gamma_{2}(x)),\ldots,(\gamma_{1}(x),\gamma_{2}(x))}_{m-j}/\underbrace{(\gamma_{1}(x),\gamma_{2}(x)),\ldots,(\gamma_{1}(x),\gamma_{2}(x))}_{n})=\frac{D_{\gamma_{1}, \gamma_{2}}^{j}G(x)}{j!}.
	\end{align*}

\end{proof}

\begin{proposition}\label{th:blossom}
	Let $g((x_{1},w_{1}),\ldots,(x_{d},w_{d}))$ denote the non-polynomial homogeneous blossom of $G(x)$. Then	
	\begin{align}\nonumber
		g_{e}((x_{1},w_{1}),&\ldots,(x_{m},w_{m})/(u_{1},v_{1}),\ldots,(u_{n},v_{n}))\\&\label{blossom1}=\frac{\sum (-1)^{\beta} g((x_{i_{1}},w_{i_{1}}),\ldots,(x_{i_{\alpha}},w_{i_{\alpha}}),(u_{j_{1}},v_{j_{1}}),\ldots,(u_{j_{\beta}},v_{j_{\beta}}))}{\binom{k}{d}}, \quad k=m-n \geq d 	\end{align}	
	\begin{align}\nonumber
		g_{e}((x_{1},w_{1}),&\ldots,(x_{m},w_{m})/(u_{1},v_{1}),\ldots,(u_{n},v_{n}))\\\label{blossom2}&=\frac{\sum (-1)^{\beta} g((x_{i_{1}},w_{i_{1}}),\ldots,(x_{i_{\alpha}},w_{i_{\alpha}}),(u_{j_{1}},v_{j_{1}}),\ldots,(u_{j_{\beta}},v_{j_{\beta}}))}{\binom{k}{d}}, \qquad k=m-n < d,
	\end{align}
	where the sum taken over all collections of indices ${i_{1},\ldots,i_{\alpha}}$ and ${j_{1},\ldots,j_{\beta}}$ such that \\
	(i) $i_{1},\ldots,i_{\alpha}$ are distinct\\
	(ii) $ j_{1},\ldots,j_{\beta}$ need not be distinct \\
	(iii) $\alpha+\beta=d$.	
\end{proposition}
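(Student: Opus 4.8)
The plan is to follow exactly the route used for Theorem~\ref{positive_eb}, of which the present statement is the generalization (the case $d=k$, where $\binom{k}{d}=1$): show that the right-hand side of (\ref{blossom1}) satisfies the four axioms of the extended non-polynomial homogeneous blossom of $G$ of order $k$, and then invoke the uniqueness established in Lemma~\ref{uniq} (so, under the hypothesis $1\in\text{span}\{\gamma_1,\gamma_2\}$ used there). Write $\Sigma$ for the numerator, i.e.\ the sum of $(-1)^{\beta}g\big((x_{i_1},w_{i_1}),\ldots,(x_{i_\alpha},w_{i_\alpha}),(u_{j_1},v_{j_1}),\ldots,(u_{j_\beta},v_{j_\beta})\big)$ over all pairs $(S,T)$ with $S=\{i_1,\ldots,i_\alpha\}\subseteq\{1,\ldots,m\}$, $T=\{j_1,\ldots,j_\beta\}$ a multiset on $\{1,\ldots,n\}$, and $\alpha+\beta=d$; the candidate blossom is $\Sigma/\binom{k}{d}$.

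Bisymmetry in the $(x,w)$ and $(u,v)$ parameters and linearity in each $(x_i,w_i)$ are immediate, exactly as in Theorem~\ref{positive_eb}: the index set of the summation is invariant under permutations of $\{1,\ldots,m\}$ and of $\{1,\ldots,n\}$ while $g$ is symmetric, and a fixed pair $(x_i,w_i)$ occurs linearly and at most once in each summand because the indices $i_1,\ldots,i_\alpha$ are distinct; the constant $\binom{k}{d}$ plays no role here.

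The cancellation axiom is the combinatorial core. Put $(x_1,w_1)=(u_1,v_1)$. The summands of $\Sigma$ for which $1\notin S$ and $1$ does not occur in $T$ are precisely those appearing in the analogous sum for the $m-1$ parameters $(x_2,w_2),\ldots,(x_m,w_m)$ over the $n-1$ parameters $(u_2,v_2),\ldots,(u_n,v_n)$, and since $m-n$ is unchanged the normalization $\binom{k}{d}$ is the same. On the remaining summands I would use the map that transfers one unit of the index $1$ between $S$ and $T$: if $1\in S$, delete it from $S$ and adjoin one copy of $1$ to $T$; if $1\notin S$, delete one copy of $1$ from $T$ and adjoin $1$ to $S$. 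This is a fixed-point-free involution on those summands; it flips $(-1)^{\beta}$, and because $(x_1,w_1)=(u_1,v_1)$ it leaves the list of arguments fed to $g$ unchanged, hence the value of $g$ unchanged. So those summands cancel in pairs, leaving $\Sigma$ for the reduced parameter lists; dividing by the (unchanged) $\binom{k}{d}$ gives the cancellation property.

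For the diagonal axiom, set every parameter equal to $(\gamma_1(x),\gamma_2(x))$. By the diagonal axiom for the ordinary $d$-variable blossom $g$, every summand of $\Sigma$ then equals $G(x)$, so the candidate blossom evaluates on the diagonal to $\dfrac{G(x)}{\binom{k}{d}}\displaystyle\sum_{\alpha+\beta=d}(-1)^{\beta}\binom{m}{\alpha}\binom{n+\beta-1}{\beta}$, where $\binom{n+\beta-1}{\beta}$ counts the size-$\beta$ multisets on $\{1,\ldots,n\}$. The identity $\sum_{\alpha+\beta=d}(-1)^{\beta}\binom{m}{\alpha}\binom{n+\beta-1}{\beta}=\binom{k}{d}$ then follows by reading off the coefficient of $x^{d}$ in $(1+x)^{m}(1+x)^{-n}=(1+x)^{m-n}$, so the diagonal value is $G(x)$; uniqueness now yields (\ref{blossom1}). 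The identity (\ref{blossom2}) is verified by the same four steps, the only new feature being that $\binom{k}{d}$ degenerates, so that the statement must be read in the cleared form $\binom{k}{d}\,g_e=\Sigma$; pinning down this degenerate normalization (equivalently, the status of $g_e$ when $k<d$) is the one point I expect to require genuine care. Everything else is bookkeeping parallel to Theorem~\ref{positive_eb}, the substantive ingredients being the sign-reversing involution for cancellation and the binomial identity for the diagonal.
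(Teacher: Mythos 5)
Your proposal is correct and follows essentially the same route as the paper: verify the four axioms for the right-hand side and conclude by uniqueness, with the sign-reversing transfer of an index between the two blocks giving cancellation and a counting argument giving the diagonal normalization $\binom{k}{d}$. In fact you supply the details the paper defers to Goldman's work — the explicit involution and the identity $\sum_{\alpha+\beta=d}(-1)^{\beta}\binom{m}{\alpha}\binom{n+\beta-1}{\beta}=\binom{m-n}{d}$ — and you are right to flag the degenerate case $0\le k<d$, which the paper's statement leaves unaddressed (the formula is really only meaningful for $k\ge d$ or $k<0$, where $\binom{k}{d}\ne 0$).
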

\begin{proof}
	We have already proved equation (\ref{blossom1}) for the case $k = d$ in Theorem \ref{positive_eb}. The uniqueness of the extended non-polynomial homogeneous blossom makes it sufficient to demonstrate that the right-hand sides of these equations satisfy the four axioms of the extended non-polynomial homogeneous blossom. The proof for  $k\neq d$ has not changed much; the main change is that when checking the diagonal property, we need to account for the constant coefficient $ \binom{k}{d}^{-1}$. A similar proof is already given in \cite{ron1}. This proof is completed using simple counting arguments.  To obtain more comprehensive information, refer to reference \cite{ron1}.
\end{proof}

\newpage
\begin{theorem}\label{mainr}
	Let $ G(x) \in \pi_{d}(\gamma_{1}, \gamma_{2})$, and let $g^{(n-1)}$ denote the multilinear blossom of $D_{\gamma_{1}, \gamma_{2}}^{n-1}G$. Then
	\begin{align}\label{mainresult}
	G(x)_{\gamma_{1}, \gamma_{2}}\left[\varepsilon_{1},\ldots,\varepsilon_{n}\right]=\left\{ \frac{(d-n+1)!}{d!}\right\}\sum g^{(n-1)}((u_{j_{1}},v_{j_{1}}),\ldots,(u_{j_{d-n+1}},v_{j_{d-n+1}})),
	\end{align}	
	where $(u_i,v_i)=(\gamma_{1}(\varepsilon_i),\gamma_{2}(\varepsilon_i)),\; i=1,\ldots,n$ and the sum is taken over all indices $j_{1},\ldots,j_{d-n+1}$ such that $1\leq j_{1}\leq\ldots \leq j_{d-n+1}\leq n$.
\end{theorem}

\begin{proof}
The proof is modeled on the argument in \cite{ron2}.	By Theorem \ref{negativebb},
	\begin{align*}
	G(x)_{\gamma_{1}, \gamma_{2}}\left[\varepsilon_{1},\ldots,\varepsilon_{n}\right]=(-1)^{n-1} g_{e}(\underbrace{(\delta_{1},\delta_{2}),\ldots,(\delta_{1},\delta_{2})}_{n-1}/(u_{1},v_{1}),\ldots,(u_{n},v_{n})),
	\end{align*}
	and by Proposition \ref{th:blossom},
	\begin{align*}
		g_{e}(\underbrace{(\delta_{1},\delta_{2}),\ldots,(\delta_{1},\delta_{2})}_{n-1}/(u_{1},v_{1}),\ldots,(u_{n},v_{n}))=\frac{\sum (-1)^{\beta} g(\underbrace{(\delta_{1},\delta_{2}),\ldots,(\delta_{1},\delta_{2})}_{\alpha},(u_{j_{1}},v_{j_{1}}),\ldots,(u_{j_{\beta}},v_{j_{\beta}}))}{(-1)^{d}},
	\end{align*}
	where $g$ is the non-polynomial homogeneous blossom of $G$.	At this point, we note that in Proposition~\ref{th:blossom} the parameters $\alpha$ and $\beta$
	satisfy $\alpha+\beta=d$, where $\alpha$ denotes the number of arguments equal to
	$(\delta_1,\delta_2)$ and $\beta$ the number of arguments of the form $(u_j,v_j)$.
	Since the divided difference on the left-hand side is taken with respect to $n$
	nodes $\varepsilon_1,\dots,\varepsilon_n$, it represents an expression of order
	$n-1$. Consequently, on the right-hand side only those terms can contribute for
	which exactly $n-1$ arguments are equal to $(\delta_1,\delta_2)$; otherwise the
	order would not match. This forces $\alpha=n-1$, and hence
	$\beta=d-(n-1)=d-n+1$. With this choice, the sign factor simplifies as
	\[
	(-1)^{\beta}(-1)^d = (-1)^{d-n+1}(-1)^d = (-1)^{\,n-1},
	\]
	which explains the sign appearing in the resulting expression. Thus
	\begin{align}\nonumber
	&G(x)_{\gamma_{1}, \gamma_{2}}\left[\varepsilon_{1},\ldots,\varepsilon_{n}\right]\\&=(-1)^{n-1}\sum  g(\underbrace{(\delta_{1},\delta_{2}),\ldots,(\delta_{1},\delta_{2})}_{n-1},(u_{j_{1}},v_{j_{1}}),\ldots,(u_{j_{d-n+1}},v_{j_{d-n+1}}))\label{HH}.
	\end{align}
	Moreover by Theorem \ref{th:dif},
	\begin{align*}
		\frac{d!}{(d-n+1)!}g(\underbrace{(\delta_{1},\delta_{2}),\ldots,(\delta_{1},\delta_{2})}_{n-1},\underbrace{(\gamma_{1}(x),\gamma_{2}(x))\ldots,(\gamma_{1}(x),\gamma_{2}(x))}_{d-n+1})=D_{\gamma_{1}, \gamma_{2}}^{n-1}G(x).
	\end{align*}
	Therefore,
	\begin{align*}
		\frac{d!}{(d-n+1)!}g(\underbrace{(\delta_{1},\delta_{2}),\ldots,(\delta_{1},\delta_{2})}_{n-1},(u_{j_{1}},v_{j_{1}})&,\ldots,(u_{j_{d-n+1}},v_{j_{d-n+1}}))\\&=g^{(n-1)}((u_{j_{1}},v_{j_{1}}),\ldots,(u_{j_{d-n+1}},v_{j_{d-n+1}}))
	\end{align*}
	since as a function of the $x$ parameters, the left-hand side is symmetric, multiaffine and reduces to $D_{\gamma_{1}, \gamma_{2}}^{n-1}G(x)$ along the diagonal.
	Substituting this result into equation (\ref{HH}), we get equation (\ref{mainresult}).
\end{proof}
The results obtained in this paper reduce to the classical polynomial case. 
	Consider the space $\pi_n(\gamma_1,\gamma_2)$ with the choice
	\[
	\gamma_1(x)=x, \qquad \gamma_2(x)=1.
	\]
	In this case we obtain
	\[
	d(x_1,x_2)
	=\gamma_1(x_1)\gamma_2(x_2)-\gamma_1(x_2)\gamma_2(x_1)
	= x_1-x_2 .
	\]
	Consequently the generalized derivative $D_{\gamma_1,\gamma_2}$ reduces to the ordinary derivative and the non-polynomial divided difference becomes the classical polynomial divided difference. Also, the non-polynomial homogeneous blossom coincides with the standard blossom of a polynomial.  Moreover, the parameter pairs appearing in the blossom take the form $(u_i,v_i)=(\varepsilon_i,1)$.  Since in the polynomial case the blossom depends only on the first coordinate, this identity reduces exactly to the classical relation between divided differences and blossoming obtained by Goldman \cite{ron1}. Thus Theorem~\ref{mainr} may be viewed as a direct extension of Goldman's polynomial formula to the non-polynomial framework considered in this paper.

\section{Conclusions}\label{conc}
Non-polynomial divided differences satisfy numerous identities and properties. In the polynomial case,   the divided differences of a function can be expressed in terms of the blossom of the function. The main focus of this paper is to establish an analogue of that relationship. We generalize the non-polynomial homogeneous blossom  by adding a cancellation axiom and a new set of $(u,v)$ parameters to this theory. 
The non-polynomial homogeneous blossom defined in \cite{cetin2} is a special case of this general blossom. Additionally, we demonstrate the existence of a non-polynomial homogeneous variant of the multirational blossom. Using these tools, we show that there is a relation between the non-polynomial divided difference and blossoming, which is analogous to the polynomial case.

In the future, we aim to extend the non-polynomial Bernstein-B\'{e}zier bases defined in \cite{cetin2} to negative degrees and  to investigate whether the non-polynomial homogeneous variant of the multirational blossoming provides the dual functionals for the negative degree non-polynomial Bernstein-B\'{e}zier bases.

\section*{Acknowledgement}
This work was supported by Scientific Research Projects Department of Istanbul Technical
University. Project Number: THD-2023-44940.

\section*{Conflict of interest statement}

The author declares no conflicts of interest.




\end{document}